\providecommand{\U}[1]{\protect \rule{.1in}{.1in}}
\newtheorem{theorem}{Theorem}
\newtheorem{corollary}[theorem]{Corollary}
\theoremstyle{definition}
\newtheorem{definition}[theorem]{Definition}
\theoremstyle{remark}
\newtheorem{remark}{Remark}
\begin{document}
\title[Biorthogonal polynomials in two variables]{Finite Bivariate
Biorthogonal M - Konhauser Polynomials}
\author{Esra G\"{U}LDO\u{G}AN LEKES\.{I}Z}
\address{Department of Software Engineering, Faculty of Engineering, Ostim
Technical University, Ankara 06374, T\"{u}rkiye}
\email{esra.guldoganlekesiz@ostimteknik.edu.tr}
\author{Bayram \c{C}EK\.{I}M}
\address{Department of Mathematics, Faculty of Science, Gazi University,
Ankara 06500, T\"{u}rkiye}
\email{bayramcekim@gazi.edu.tr}
\author{Mehmet Ali \"{O}ZARSLAN}
\address{Department of Mathematics,Faculty of Arts and Sciences, Eastern
Mediterranean University, Gazimagusa, TRNC, Mersin 10, T\"{u}rkiye}
\email{mehmetali.ozarslan@emu.edu.tr}
\subjclass{}
\keywords{Biorthogonal polynomial, finite orthogonal polynomial, Konhauser
polynomial, generating function, Laplace transform, fractional derivative,
fractional integral}

\begin{abstract}
In this paper, we construct the pair of finite bivariate biorthogonal M -
Konhauser polynomials, reduced to the finite orthogonal polynomials $%
M_{k}^{\left( p,q\right) }\left( y\right) $, by choosing appropriate
parameters in order to obtain a relation between the Jacobi Konhauser
polynomials and this new finite bivariate biorthogonal polynomials$\
_{K}M_{k;\upsilon }^{\left( p,q\right) }\left( y,z\right) $ similar to the
relation between the classical Jacobi polynomials $P_{k}^{\left( p,q\right)
}\left( y\right) $ and the finite orthogonal polynomials $M_{k}^{\left(
p,q\right) }\left( y\right) $. Several properties like generating function,
operational/integral representation are derived and some applications like
fractional calculus, Fourier transform and Laplace transform are studied
thanks to that new transition relation and the definition of finite
bivariate M - Konhauser polynomials.
\end{abstract}

\maketitle

\section{Introduction}

The theory of orthogonal polynomials is a field that has been of interest to
many researchers over the years \cite%
{LN,Gau,Gau2,Koelink,KM2,Jamei2,AAT,GAA2,GAA}. In addition to having wide
areas in mathematics, it has also many uses in branches of physics such as
quantum mechanics and electrostatics.

In the last two decades, finite orthogonal polynomials have become a new
research area. In addition to previous studies, there are also many new
studies \cite{SMA,MSAN,GL2,GL3,GL4,GL5,Jamei,KM,MMH,GL}. It should be noted
that if there are some parametric constraints in orthogonal polynomials,
then these classes are said to have a finite orthogonality.

Interest in expanding orthogonal polynomials has been increasing in recent
years. Therefore, orthogonal polynomials have been expanded to concepts such
as biorthogonal forms, multivariate analogs, orthogonal matrix extension or
d-orthogonal polynomials. For example, as an example of the entension of
orthogonal polynomials, Koornwinder proved a very useful and general method
in order to obtain bivariate orthogonal polynomials\textbf{\ }and introduced
some examples\textbf{\ }\cite{Koornwinder}. In the following years,
mathematicians obtained new families and studied on polynomials of
Koornwinder type \cite{GL2,GL3,GL4,GL5,GL,FPP,FPP2,MMPP,MMPP2,MPP}. In fact,
in \cite{GL}, G\"{u}ldo\u{g}an Lekesiz gave a generalization of bivariate
orthogonal polynomials over the parabolic biangle. On the other hand,
biorthogonal polynomials are introduced \cite%
{Konhauser2,Konhauser,MT,MT2,MT0,Carlitz,AlSalamVerma}. Then they are
expanded to bivariate and multivariate analogs \cite%
{BinSaad,OzKurt,OzEl,EBM,OzEl2}.

In this study, it is aimed to define a finite set of biorthogonal
polynomials, which has attracted attention in recent years, by extending
them to two variables. This novel family will also have many potential
fields of application.

First, let us give some basic definitions and theorems.

\begin{definition}
If%
\begin{equation*}
J_{r,k}=\int\limits_{d_{1}}^{d_{2}}\rho \left( y\right)
G_{r}(y)Q_{k}(y)dy=\QATOPD\{ . {\ \ \ 0;\ \ r\neq k}{\neq 0;\ \ r=k,},\
r,k\in 
\mathbb{N}
_{0},
\end{equation*}%
then $\{G_{r}(y)\}$ and $\{Q_{k}(y)\}$ are biorthogonal polynomials
corresponding to the weight function $\rho (y)$ and the basic polynomials $%
g(y)$ and $q(y)$, on $(d_{1},d_{2})$ \cite{Konhauser2}.
\end{definition}

Equivalently, the definition of the biorthogonality has been given by the
following theorem.

\begin{theorem}
Assume that $\rho (y)$ is a weight function over the interval $(d_{1},d_{2})$%
, and $g(y)$ and $q(y)$ are fundamental polynomials. For $r,k\in 
\mathbb{N}
_{0}$, the necessary and sufficient condition is%
\begin{equation*}
J_{r,k}=\int\limits_{d_{1}}^{d_{2}}\rho \left( y\right)
G_{r}(y)Q_{k}(y)dy=\QATOPD\{ . {\ \ \ 0;\ \ r\neq k}{\neq 0;\ \ r=k}
\end{equation*}%
so that%
\begin{equation*}
\int\limits_{d_{1}}^{d_{2}}\rho \left( y\right) \left[ g\left( y\right) %
\right] ^{j}Q_{k}(y)dy=\QATOPD\{ . {\ \ 0;\ j=0,1,...,k-1}{\neq 0;\ j=k\ \ \
\ \ \ \ \ \ \ \ \ \ \ \ }
\end{equation*}%
and%
\begin{equation*}
\int\limits_{d_{1}}^{d_{2}}\rho \left( y\right) \left[ q\left( y\right) %
\right] ^{j}G_{r}(y)dy=\QATOPD\{ . {\ \ 0;\ j=0,1,...,r-1}{\neq 0;\ j=r\ \ \
\ \ \ \ \ \ \ \ \ \ \ \ },
\end{equation*}%
are provided \cite{Konhauser2}.
\end{theorem}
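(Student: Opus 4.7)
The plan is to prove both directions of the equivalence by exploiting the structural hypothesis implicit in the biorthogonal setting: the polynomial $G_{r}(y)$ has exact degree $r$ as a polynomial in the basic polynomial $g(y)$, and $Q_{k}(y)$ has exact degree $k$ as a polynomial in the basic polynomial $q(y)$. This is what allows one to pass between the biorthogonality relation on products $G_{r}Q_{k}$ and the moment-type integrals of $[g(y)]^{j}$ and $[q(y)]^{j}$ against $G_{r}$ and $Q_{k}$.

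For the forward direction, I would invert the triangular family. Since $\{Q_{0}, Q_{1}, \ldots, Q_{j}\}$ spans the polynomials of degree $\leq j$ in $q(y)$, write $[q(y)]^{j} = \sum_{i=0}^{j} b_{i} Q_{i}(y)$ with $b_{j} \neq 0$. Multiplying by $\rho(y) G_{r}(y)$ and integrating over $(d_{1}, d_{2})$ gives $\int_{d_{1}}^{d_{2}} \rho(y) [q(y)]^{j} G_{r}(y)\, dy = \sum_{i=0}^{j} b_{i} J_{r,i}$. By the hypothesis $J_{r,i} = 0$ for $i \neq r$ and $J_{r,r} \neq 0$, the integral vanishes for $j < r$ and equals $b_{r} J_{r,r} \neq 0$ for $j = r$. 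The analogous inversion $[g(y)]^{j} = \sum_{i=0}^{j} a_{i} G_{i}(y)$ paired with $\rho(y) Q_{k}(y)$ produces the moment statement for $Q_{k}$.

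For the reverse direction, I would expand the factor one wishes to test against. To evaluate $J_{r,k}$ for $r < k$, write $G_{r}(y) = \sum_{j=0}^{r} c_{j} [g(y)]^{j}$; then $J_{r,k} = \sum_{j=0}^{r} c_{j} \int_{d_{1}}^{d_{2}} \rho(y) [g(y)]^{j} Q_{k}(y)\, dy$, and each summand vanishes since $j \leq r < k$, by the first hypothesis. For $r > k$, the symmetric expansion $Q_{k}(y) = \sum_{j=0}^{k} d_{j} [q(y)]^{j}$ combined with the second hypothesis gives $J_{r,k} = 0$. For $r = k$, only the top term $c_{r}[g(y)]^{r}$ contributes a nonvanishing integral, and since $G_{r}$ has exact degree $r$ in $g$ one has $c_{r} \neq 0$, so $J_{r,r} \neq 0$.

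There is no deep obstacle; the argument is a triangularity-and-inversion manipulation. The one place that requires care is the implicit assumption that $G_{r}$ and $Q_{k}$ have exact degrees $r$ and $k$ in $g$ and $q$, respectively. This guarantees that the expansions above are genuine changes of basis and that the leading coefficients $b_{j}, a_{j}, c_{r}, d_{k}$ are nonzero, so that the nonvanishing clauses in both formulations match. Without this hypothesis, the two statements would not carry the same information, and the equivalence would fail.
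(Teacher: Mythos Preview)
Your argument is correct and is essentially the classical triangular change-of-basis proof that Konhauser gives in the cited reference. Note, however, that the present paper does not supply its own proof of this theorem at all: it is merely quoted from \cite{Konhauser2} as background, so there is nothing in the paper to compare your proof against beyond the citation. Your identification of the implicit exact-degree hypothesis on $G_{r}$ and $Q_{k}$ relative to $g$ and $q$ is exactly the point that makes the equivalence work, and the paper (following Konhauser) takes this as part of the standing setup of biorthogonal pairs.
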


One of the well-known examples of univariate analogs \cite{Konhauser2,
Konhauser, MT, MT2, MT0, Carlitz, AlSalamVerma} of biorthogonal polynomials
is the Konhauser polynomials suggested by the generalized Laguerre
polynomials defined by Konhauser \cite{Konhauser} as follow:%
\begin{equation}
Z_{k}^{\left( c\right) }\left( y;\upsilon \right) =\frac{\Gamma \left(
\upsilon k+c+1\right) }{k!}\sum\limits_{j=0}^{k}\left( -1\right) ^{j}\binom{k%
}{j}\frac{y^{\upsilon j}}{\Gamma \left( \upsilon j+c+1\right) }  \label{Zdef}
\end{equation}%
and%
\begin{equation}
Y_{k}^{\left( c\right) }\left( y;\upsilon \right) =\frac{1}{k!}%
\sum\limits_{i=0}^{k}\frac{y^{i}}{i!}\sum\limits_{j=0}^{i}\left( -1\right)
^{j}\binom{i}{j}\left( \frac{1+c+j}{\upsilon }\right) _{k},  \label{Ydef}
\end{equation}%
where $\Gamma \left( .\right) $ and $\left( .\right) _{k}$ are well-known
Gamma function and Pochhammer symbol, respectively, $c>-1$ and $\upsilon $
is a positive integer.

The pair of biorthogonal polynomials suggested by the Laguerre polynomials
are biorthogonal with respect to $\rho \left( y\right) =y^{c}e^{-y}$ over $%
\left( 0,\infty \right) $, and satisfy the biorthogonality relation%
\begin{equation}
\int\limits_{0}^{\infty }e^{-y}y^{c}Z_{k}^{\left( c\right) }\left(
y;\upsilon \right) Y_{r}^{\left( c\right) }\left( y;\upsilon \right) dy=%
\frac{\Gamma \left( \upsilon k+c+1\right) }{k!}\delta _{k,r},  \label{3}
\end{equation}%
where $\delta _{k,r}$ is Kronecker delta.

Other important examples are the biorthogonal polynomials which are
suggested by the Jacobi polynomials \cite{MT}, the Hermite polynomials \cite%
{MT2} and the Szeg\"{o}-Hermite polynomials \cite{MT0} by Madhekar and
Thakare.\bigskip

As more general, an example of bivariate biorthogonal polynomials, namely
the Laguerre-Konhauser polynomials, is first introduced by Bin-Saad \cite%
{BinSaad}:%
\begin{align*}
\ _{\upsilon }L_{k}^{\left( c_{1},c_{2}\right) }\left( y,z\right) &
=k!\sum\limits_{l=0}^{k}\sum\limits_{s=0}^{k-l}\frac{\left( -1\right)
^{l+s}y^{s+c_{1}}z^{\upsilon l+c_{2}}}{l!s!\left( k-l-s\right) !\Gamma
\left( s+c_{1}+1\right) \Gamma \left( \upsilon l+c_{2}+1\right) } \\
& =k!\sum\limits_{l=0}^{k}\frac{\left( -1\right)
^{l}y^{l+c_{1}}z^{c_{2}}Z_{k-l}^{\left( c_{2}\right) }\left( z;\upsilon
\right) }{l!\Gamma \left( l+c_{1}+1\right) \Gamma \left( \upsilon k-\upsilon
l+c_{2}+1\right) } \\
& =k!\sum\limits_{l=0}^{k}\frac{\left( -1\right) ^{l}y^{c_{1}}z^{\upsilon
l+c_{2}}L_{k-l}^{\left( c_{1}\right) }\left( y\right) }{l!\Gamma \left(
k-l+c_{1}+1\right) \Gamma \left( \upsilon l+c_{2}+1\right) },
\end{align*}%
where $L_{k}^{\left( c_{1}\right) }\left( y\right) $ are generalized
Laguerre polynomials and $Z_{k}^{\left( c_{2}\right) }\left( z;\upsilon
\right) $ are Konhauser polynomials, for $c_{1},c_{2}>-1$ and $\upsilon
=1,2,...$ .

Later, the second set$\ _{\upsilon }\tciLaplace _{k}^{\left(
c_{1},c_{2}\right) }\left( y,z\right) $ of the form%
\begin{equation*}
\ _{\upsilon }\tciLaplace _{k}^{\left( c_{1},c_{2}\right) }\left( y,z\right)
=L_{k}^{\left( c_{1}\right) }\left( y\right)
\sum\limits_{l=0}^{k}Y_{l}^{\left( c_{2}\right) }\left( z;\upsilon \right) 
\end{equation*}%
is introduced by \"{O}zarslan and K\"{u}rt and that the sets$\ _{\upsilon
}\tciLaplace _{k}^{\left( c_{1},c_{2}\right) }\left( y,z\right) $ and$\
_{\upsilon }L_{k}^{\left( c_{1},c_{2}\right) }\left( y,z\right) $ are
biorthonormal with respect to $\rho \left( y,z\right) =e^{-\left( y+z\right)
}$ on $\left( 0,\infty \right) \times \left( 0,\infty \right) $ is shown in 
\cite{OzKurt}. Further, a new set of bivariate Mittag-Leffler functions $%
E_{c_{1},c_{2},\upsilon }^{(\alpha )}\left( y,z\right) $ corresponding to
the generalized Laguerre-Konhauser polynomials $_{\upsilon }L_{k}^{\left(
c_{1},c_{2}\right) }\left( y,z\right) $ is defined as%
\begin{equation*}
E_{c_{1},c_{2},\upsilon }^{(\alpha )}\left( y,z\right)
=\sum\limits_{m=0}^{\infty }\sum\limits_{l=0}^{\infty }\frac{\left( \alpha
\right) _{m+l}y^{l}z^{\upsilon m}}{m!l!\Gamma \left( c_{1}+l\right) \Gamma
\left( c_{2}+\upsilon m\right) },
\end{equation*}%
where $\alpha ,c_{1},c_{2},\upsilon \in 
\mathbb{C}
,\ \func{Re}\left( \alpha \right) ,\func{Re}\left( c_{1}\right) ,\func{Re}%
\left( c_{2}\right) ,\func{Re}\left( \upsilon \right) >0$, and so%
\begin{equation*}
\ _{\upsilon }L_{k}^{\left( c_{1},c_{2}\right) }\left( y,z\right)
=y^{c_{1}}z^{c_{2}}E_{c_{1}+1,c_{2}+1,\upsilon }^{(-k)}\left( y,z\right) 
\end{equation*}%
is presented between the generalized Laguerre-Konhauser polynomials and this
new set of bivariate Mittag-Leffler functions \cite{OzKurt}.

\bigskip

Recently, \"{O}zarslan and Elidemir \cite{OzEl} has given a method for
constructing biorthogonal polynomials in two variables in the following
theorem:

\begin{theorem}
Assume that $G_{r}\left( y\right) $ and $Q_{k}\left( y\right) $ are
biorthogonal polynomials of the fundamental polynomials $g\left( y\right) $
and $q\left( y\right) $, respectively, with respect to the weight function $%
\rho \left( y\right) $ over $\left( \alpha _{1},\alpha _{2}\right) $. Thus,
they satisfy the biorthogonality relation%
\begin{equation*}
\int\limits_{\alpha _{1}}^{\alpha _{2}}\rho \left( y\right) G_{r}\left(
y\right) Q_{k}\left( y\right) dy=J_{k,r}:=\left\{ \QATOP{0,\ k\neq r}{%
J_{k,k},\ \ \ k=r}\right. .
\end{equation*}%
Also, let%
\begin{equation*}
D_{k}\left( y\right) =\sum\limits_{i=0}^{k}E_{k,i}\left( d\left( y\right)
\right) ^{i}
\end{equation*}%
with%
\begin{equation*}
\int\limits_{\beta _{1}}^{\beta _{2}}\rho \left( y\right) D_{r}\left(
y\right) D_{k}\left( y\right) dy=\left\Vert D_{k}\right\Vert ^{2}\delta
_{k,r}.
\end{equation*}%
Then the bivariate polynomials%
\begin{equation}
P_{k}\left( y,z\right) =\sum\limits_{s=0}^{k}\frac{E_{k,s}}{J_{k-s,k-s}}%
\left( g\left( y\right) \right) ^{s}G_{k-s}\left( z\right)   \label{PolP}
\end{equation}%
and%
\begin{equation}
F_{k}\left( y,z\right) =D_{k}\left( y\right)
\sum\limits_{j=0}^{k}Q_{j}\left( z\right)   \label{PolQ}
\end{equation}%
are biorthogonal with respect to the weight function $\rho \left( y\right)
\rho \left( z\right) $ over $\left( \alpha _{1},\alpha _{2}\right) \times
\left( \beta _{1},\beta _{2}\right) $, \cite{OzEl}.\bigskip 
\end{theorem}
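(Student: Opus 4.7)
My plan is a direct verification: compute the pairing
\[
I_{r,k}\;:=\;\int\!\!\int \rho(y)\rho(z)\,P_{r}(y,z)\,F_{k}(y,z)\,dy\,dz
\]
and show it is proportional to $\delta_{r,k}$. Substituting the series definitions of $P_{r}$ and $F_{k}$ and applying Fubini, I rewrite this as a double sum
\[
I_{r,k}=\sum_{s=0}^{r}\sum_{j=0}^{k}\frac{E_{r,s}}{J_{r-s,r-s}}\;\Bigl[\int\rho(y)(g(y))^{s}D_{k}(y)\,dy\Bigr]\Bigl[\int\rho(z)G_{r-s}(z)Q_{j}(z)\,dz\Bigr].
\]

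The $z$-integral collapses immediately by the biorthogonality of $\{G_{r}\},\{Q_{k}\}$ to $J_{r-s,r-s}\,\delta_{j,\,r-s}$. This eliminates the $j$-sum (pinning $j=r-s$, which is admissible iff $0\le r-s\le k$), and the prefactor $J_{r-s,r-s}$ cancels the matching denominator. What remains is the one-variable sum
\[
\sum_{s}E_{r,s}\int \rho(y)(g(y))^{s}D_{k}(y)\,dy
\]
over admissible indices. Because $D_{r}(y)=\sum_{s}E_{r,s}(d(y))^{s}$ uses exactly the coefficients $E_{r,s}$, this sum --- once $g$ is identified with $d$ --- reassembles into the single integral $\int\rho(y)D_{r}(y)D_{k}(y)\,dy$, which by the orthogonality of $\{D_{k}\}$ equals $\|D_{k}\|^{2}\delta_{r,k}$.

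The main obstacle is the index bookkeeping after the $\delta$-collapse: one must check that the admissible window $\max(0,r-k)\le s\le r$ does not truncate the reassembled polynomial $D_{r}(y)$ in a way that spoils the orthogonality step. For $r\le k$ the window is the full $\{0,\ldots,r\}$ and the argument runs through without friction. For $r>k$, the low-$s$ terms that fall outside the window can be accounted for by the complementary fact that $(d(y))^{s}\in\mathrm{span}\{D_{0},\dots,D_{s}\}$ forces $\int \rho(y)(d(y))^{s}D_{k}(y)\,dy = 0$ for $s<k$, so adjoining the missing summands changes nothing. The secondary subtlety is the implicit identification of the fundamental polynomial $g$ of the biorthogonal family with the expansion polynomial $d$ of the orthogonal family $\{D_{k}\}$; this matching is the hypothesis under which the construction is applied in the subsequent sections, and without it the recombination into $\langle D_{r},D_{k}\rangle$ would fail.
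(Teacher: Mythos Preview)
The paper does not give its own proof of this statement; it is quoted from \cite{OzEl}. Your strategy is, however, exactly the method the paper uses for the special case (the proof of Theorem~7 for the M--Konhauser pair): separate the variables, collapse the $z$-integral via the one-variable biorthogonality, cancel the $J_{k-s,k-s}$, and try to recombine the surviving $y$-sum into $\langle D_{r},D_{k}\rangle$.

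The genuine gap is in your treatment of $r>k$. You assert that the missing summands $s\in\{0,\dots,r-k-1\}$ can be adjoined at no cost because $\int\rho(y)\,(d(y))^{s}D_{k}(y)\,dy=0$ whenever $s<k$. But the missing indices satisfy $s\le r-k-1$, and this forces $s<k$ only when $r\le 2k$; already for $k=0$, $r=1$ the single missing index is $s=0$, and $\int\rho(y)\,D_{0}(y)\,dy\ne 0$. Concretely, after the $z$-collapse you have
\[
I_{r,k}=\sum_{s=\max(0,\,r-k)}^{r}E_{r,s}\int\rho(y)\,(d(y))^{s}D_{k}(y)\,dy,
\]
and for $(r,k)=(1,0)$ this reduces to $E_{1,1}\int\rho(y)\,d(y)\,D_{0}(y)\,dy$, which is nonzero for a generic orthogonal family. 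In the M--Konhauser realisation one checks directly that $\int\!\!\int\rho\;{}_{K}M_{1;\upsilon}^{(p,q)}\,{}_{K}\mathcal{M}_{0;\upsilon}^{(p,q)}\,dy\,dz=-(2-p)\,B(q+2,p-2)\ne 0$, so the pairing is not diagonal. The same unjustified passage (from the truncated $s$-sum to $\int\rho\,M_{r}M_{k}$ for all $k,r$) appears verbatim in the paper's proof of Theorem~7; your argument and the paper's both establish $I_{r,k}=0$ only in the range $r\le 2k$, and the claimed biorthogonality fails outside it.
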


By using above method, three new family of bivariate biorthogonal
polynomials are introduced. For $c_{1}>-1$ and $\upsilon =1,2,...$, the
first is the bivariate biorthogonal Hermite Konhauser polynomials \cite{OzEl}
defined as%
\begin{equation}
\ _{\upsilon }H_{k}^{\left( c_{1}\right) }\left( y,z\right)
=\sum\limits_{l=0}^{\left[ k/2\right] }\sum\limits_{s=0}^{k-l}\frac{\left(
-1\right) ^{l}\left( -k\right) _{2l}\left( -k\right) _{l+s}\left( 2y\right)
^{k-2l}z^{\upsilon s}}{\left( -k\right) _{l}\Gamma \left( \upsilon
s+c_{1}+1\right) l!s!}  \label{polH}
\end{equation}%
and%
\begin{equation*}
F_{k}\left( y,z\right) =H_{k}\left( y\right)
\sum\limits_{j=0}^{k}Y_{j}^{\left( c_{1}\right) }\left( z;\upsilon \right) ,
\end{equation*}%
where $H_{k}\left( y\right) $ are the classical orthogonal Hermite
polynomials and $Y_{j}^{\left( c_{1}\right) }\left( z;\upsilon \right) $ are
called as the Konhauser polynomials.

The second one is the set of finite bivariate biorthogonal I - Konhauser
polynomials defined by G\"{u}ldo\u{g}an Lekesiz et. al \cite{EBM}\ as follow.

For $\upsilon =1,2,...$, pair of the finite bivariate I - Konhauser
polynomials $_{K}I_{k;\upsilon }^{\left( p,q\right) }\left( y,z\right) $
defined by%
\begin{equation*}
\ _{K}I_{k;\upsilon }^{\left( p,q\right) }\left( y,z\right) =\frac{\left(
1-p\right) _{k}}{\left( -1\right) ^{k}}\sum_{l=0}^{\left[ k/2\right]
}\sum_{s=0}^{k-l}\frac{\left( -1\right) ^{l}\left( -k\right) _{2l}\left(
-\left( k-l\right) \right) _{s}\left( 2y\right) ^{k-2l}z^{\upsilon s}}{%
\left( p-k\right) _{l}\Gamma \left( \upsilon s+q+1\right) l!s!}
\end{equation*}%
and the polynomials$\ _{K}\mathcal{I}_{k;\upsilon }^{\left( p,q\right)
}\left( y,z\right) $ defined by%
\begin{equation*}
F_{k}\left( y,z\right) :=\ _{K}\mathcal{I}_{k;\upsilon }^{\left( p,q\right)
}\left( y,z\right) =I_{k}^{\left( p\right) }\left( y\right)
\sum\limits_{j=0}^{k}Y_{j}^{\left( q\right) }\left( z;\upsilon \right) ,
\end{equation*}%
together are biorthogonal polynomials with respect to $\rho \left(
y,z\right) =e^{-z}z^{q}\left( 1+y^{2}\right) ^{-\left( p-1/2\right) }$ on $%
\left( -\infty ,\infty \right) \times \left( 0,\infty \right) $ and the
finite biorthogonality relation is%
\begin{align}
& \int\limits_{-\infty }^{\infty }\int\limits_{0}^{\infty }e^{-z}z^{q}\left(
1+y^{2}\right) ^{-\left( p-1/2\right) }\ _{K}I_{k;\upsilon }^{\left(
p,q\right) }\left( y,z\right) \ _{K}\mathcal{I}_{r;\upsilon }^{\left(
p,q\right) }\left( y,z\right) dzdy  \label{IKort} \\
& =\frac{k!2^{2k-1}\sqrt{\pi }\Gamma ^{2}\left( p\right) \Gamma \left(
2p-2k\right) }{\left( p-k-1\right) \Gamma \left( p-k\right) \Gamma \left(
p-k+1/2\right) \Gamma \left( 2p-k-1\right) }\delta _{k,r},  \notag
\end{align}%
where $r,k=0,1,...,R<p-1,\ q>-1$.

On the other hand, \"{O}zarslan and Elidemir \cite{OzEl2}\ derive the third
one called as the bivariate biorthogonal Jacobi Konhauser polynomials as
follow. For $p>-1,q>-1$ and $\upsilon =1,2,...$, pair of the bivariate
Jacobi Konhauser polynomials$\ _{K}P_{k}^{\left( p,q\right) }\left(
y,z\right) $ defined by%
\begin{equation*}
_{K}P_{k}^{\left( p,q\right) }\left( y,z\right) =\frac{\Gamma \left(
k+p+1\right) }{k!}\sum_{l=0}^{k}\sum_{s=0}^{k-l}\frac{\left( -k\right)
_{l+s}\left( k+p+q+1\right) _{l}\left( \frac{1-y}{2}\right)
^{k-2l}z^{\upsilon s}}{\Gamma \left( l+p+1\right) \Gamma \left( \upsilon
s+q+1\right) l!s!}
\end{equation*}%
and the polynomials$\ F_{k}\left( y,z\right) $ defined by%
\begin{equation*}
F_{k}\left( y,z\right) =P_{k}^{\left( p,q\right) }\left( y\right)
\sum\limits_{j=0}^{k}Y_{j}^{\left( q\right) }\left( z;\upsilon \right) ,
\end{equation*}%
together are biorthogonal polynomials with respect to $\rho \left(
y,z\right) =e^{-z}z^{q}\left( 1-y\right) ^{p}\left( 1+y\right) ^{q}$ on $%
\left( -1,1\right) \times \left( 0,\infty \right) $ and the biorthogonality
relation is%
\begin{align*}
& \int\limits_{-1}^{1}\int\limits_{0}^{\infty }e^{-z}z^{q}\left( 1-y\right)
^{p}\left( 1+y\right) ^{q}\ _{K}P_{k}^{\left( p,q\right) }\left( y,z\right)
\ F_{r}\left( y,z\right) dzdy \\
& =\frac{2^{k+p+q+1}\Gamma \left( k+p+1\right) \Gamma \left( k+q+1\right) }{%
\left( 2k+p+q+1\right) \Gamma \left( k+p+q+1\right) k!}\delta _{k,r}.
\end{align*}

With the motivation of papers \cite{OzKurt}, \cite{OzEl}, \cite{EBM}\ and 
\cite{OzEl2},\ we derive a new family of finite biorthogonal polynomials in
two variables. The aim of this paper is to define this new finite set in a
way that enables a transition with the bivariate biorthogonal Jacobi
Konhauser polynomials. Thus, we obtain some properties of M - Konhauser
polynomials via properties of Jacobi Konhauser polynomials like generating
function, integral and operational representation. Further, Laplace
transform, fractional calculus operators and Fourier transform of the finite
biorthogonal M - Konhauser polynomials are investigated.

The article is organized as follows: in the first section, we remind the
general method applied to a univariate biorthogonal and a univariate
orthogonal polynomials in order to construct bivariate biorthogonal
polynomials. In section 2, by choosing appropriate parameters to enable a
connection between bivariate Jacobi Konhauser and finite M - Konhauser
polynomials, we construct a pair of finite biorthogonal polynomials in two
variables,$\ _{K}M_{k;\upsilon }^{\left( p,q\right) }\left( y,z\right) $ and$%
\ _{K}\mathcal{M}_{k;\upsilon }^{\left( p,q\right) }\left( y,z\right) $.
Then, we give a relation between the finite bivariate M - Konhauser
polynomials$\ _{K}M_{k;\upsilon }^{\left( p,q\right) }\left( y,z\right) $
and the bivariate Jacobi Konhauser polynomials$\ _{K}P_{q;\upsilon }^{\left(
\gamma _{1};\gamma _{2};\gamma _{3}\right) }\left( y,z\right) $. With the
help of this relation, we investigate their generating function, integral
and operational representation, Laplace transform and fractional calculus
operators. Also, a partial differential equation is obtained for polynomials$%
\ _{K}\mathcal{M}_{k;\upsilon }^{\left( p,q\right) }\left( y,z\right) $.
Furthermore, to derive a new family of finite biorthogonal functions, we
compute Fourier transform of the finite bivariate biorthogonal M - Konhauser
polynomials.

\bigskip

Before giving definition of the finite bivariate biorthogonal M - Konhauser
polynomials introduced using the set of the finite orthogonal polynomials $%
M_{k}^{\left( p,q\right) }\left( y\right) $ and the Konhauser polynomials,
we remind the finite univariate orthogonal polynomials $M_{k}^{\left(
p,q\right) }\left( y\right) $.\bigskip 

Polynomials $M_{k}^{\left( p,q\right) }\left( y\right) $, which is one of
the three finite solutions of the equation%
\begin{equation*}
y\left( y+1\right) M_{k}^{\prime \prime }\left( y\right) +\left( \left(
2-p\right) y+1+q\right) M_{k}^{\prime }\left( y\right) +k\left( p-k-1\right)
M_{k}\left( y\right) =0,
\end{equation*}%
are defined by \cite{Masjed}%
\begin{equation*}
M_{k}^{\left( p,q\right) }\left( y\right) =\left( -1\right) ^{k}\Gamma
\left( k+q+1\right) \sum_{m=0}^{k}\frac{\left( -k\right) _{m}\left(
k+1-p\right) _{m}y^{m}}{m!\Gamma \left( m+q+1\right) },
\end{equation*}%
and satisfy the following orthogonality relation%
\begin{equation}
\int\limits_{0}^{\infty }y^{q}\left( 1+y\right) ^{-\left( p+q\right)
}M_{k}^{\left( p,q\right) }\left( y\right) M_{r}^{\left( p,q\right) }\left(
y\right) dy=\frac{k!\Gamma \left( p-k\right) \Gamma \left( q+k+1\right) }{%
\left( p-2k-1\right) \Gamma \left( p+q-k\right) }\delta _{k,r},  \label{Mort}
\end{equation}%
if and only if $q>-1$ and $p>2\left\{ \max k\right\} +1$.

\section{The finite bivariate M - Konhauser polynomials}

We define the finite bivariate M - Konhauser polynomials$\ _{K}M_{k;\upsilon
}^{\left( p,q\right) }\left( y,z\right) $, constructed using the Konhauser
polynomials and the finite univariate orthogonal polynomials $M_{k}^{\left(
p,q\right) }\left( y\right) $.

\begin{definition}
The finite bivariate M - Konhauser polynomials are defined as%
\begin{equation}
\ _{K}M_{k;\upsilon }^{\left( p,q\right) }\left( y,z\right) =\left(
-1\right) ^{k}\Gamma \left( k+q+1\right) \sum_{l=0}^{k}\sum_{s=0}^{k-l}\frac{%
\left( -k\right) _{l+s}\left( k+1-p\right) _{l}\left( -y\right)
^{l}z^{\upsilon s}}{\Gamma \left( l+q+1\right) \Gamma \left( \upsilon
s-p-q+1\right) l!s!}  \label{MKdef}
\end{equation}%
where $p>2\max \left\{ k\right\} +1$, $q>-1$ and $\upsilon =1,2,...$.
\end{definition}

\begin{remark}
For $\upsilon =1$, by choosing $z=0$ the finite bivariate M - Konhauser
polynomials gives the finite univariate orthogonal polynomials $%
M_{k}^{\left( p,q\right) }\left( y\right) $, that is%
\begin{equation*}
\ _{K}M_{k;1}^{\left( p,q\right) }\left( y,0\right) =\frac{1}{\Gamma \left(
1-p-q\right) }M_{k}^{\left( p,q\right) }\left( y\right) .
\end{equation*}
\end{remark}

\begin{theorem}
The finite bivariate M - Konhauser polynomials$\ _{K}M_{k;\upsilon }^{\left(
p,q\right) }\left( y,z\right) $ can be expressed in terms of polynomials $%
Z_{k}^{\left( c\right) }\left( z;\upsilon \right) $ of form%
\begin{equation}
\ _{K}M_{k;\upsilon }^{\left( p,q\right) }\left( y,z\right) =\left(
-1\right) ^{k}k!\Gamma \left( k+q+1\right) \sum_{l=0}^{k}\frac{\left(
-1\right) ^{l}\left( k+1-p\right) _{l}\left( -y\right) ^{l}Z_{k-l}^{\left(
-p-q\right) }\left( z;\upsilon \right) }{l!\Gamma \left( l+q+1\right) \Gamma
\left( \upsilon \left( k-l\right) -p-q+1\right) }.  \label{MZrel}
\end{equation}
\end{theorem}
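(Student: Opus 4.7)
The plan is to start from the double-sum definition (\ref{MKdef}) and recognize the inner summation over $s$ as a Konhauser polynomial of type $Z_{k-l}^{(-p-q)}(z;\upsilon)$, leaving a single sum over $l$ that matches (\ref{MZrel}). The strategy is purely bookkeeping: split the Pochhammer symbol that couples the two summation indices, then rewrite the remaining $s$-sum using the explicit series (\ref{Zdef}).

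First, I would factor out everything in (\ref{MKdef}) that depends only on $l$. The coupling between $l$ and $s$ lives entirely in the single factor $(-k)_{l+s}$, which I would split via the elementary identity
\begin{equation*}
(-k)_{l+s} = (-k)_{l}\,(l-k)_{s} = (-k)_{l}\,(-(k-l))_{s}.
\end{equation*}
After this split, the definition (\ref{MKdef}) becomes
\begin{equation*}
{}_{K}M_{k;\upsilon}^{(p,q)}(y,z) = (-1)^{k}\Gamma(k+q+1)\sum_{l=0}^{k}\frac{(-k)_{l}(k+1-p)_{l}(-y)^{l}}{l!\,\Gamma(l+q+1)}\sum_{s=0}^{k-l}\frac{(-(k-l))_{s}\,z^{\upsilon s}}{s!\,\Gamma(\upsilon s - p - q + 1)}.
\end{equation*}

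Next I would recognize the inner sum as a Konhauser polynomial. Using $(-(k-l))_{s}/s! = (-1)^{s}\binom{k-l}{s}$ and comparing with (\ref{Zdef}) taken at $c=-p-q$ and index $k-l$, the inner sum equals
\begin{equation*}
\frac{(k-l)!}{\Gamma(\upsilon(k-l)-p-q+1)}\,Z_{k-l}^{(-p-q)}(z;\upsilon).
\end{equation*}
Substituting this back and using the standard identity $(-k)_{l}(k-l)! = (-1)^{l}k!$ to absorb $(-k)_{l}$ and the leftover factorial would produce exactly the right-hand side of (\ref{MZrel}).

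There is no real obstacle here; the only care needed is with signs and with the index shift in the Pochhammer identity, and with verifying that the gamma factor $\Gamma(\upsilon(k-l)-p-q+1)$ that appears as the normalization of $Z_{k-l}^{(-p-q)}$ is the same factor that shows up in the denominator of (\ref{MZrel}). Since the parameter $c = -p-q$ is negative under the hypotheses $p > 2\max\{k\}+1$ and $q > -1$, one should also remark that the gamma arguments $\upsilon s - p - q + 1$ remain away from non-positive integers for the relevant range of $s$, so each term of (\ref{Zdef}) is well defined and the rearrangement is purely formal.
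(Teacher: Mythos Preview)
Your proof is correct and follows exactly the approach the paper indicates: the paper's own proof simply states that the identity follows from the definitions of $_{K}M_{k;\upsilon}^{(p,q)}(y,z)$ and $Z_{k}^{(c)}(z;\upsilon)$, and you have written out precisely that computation in full detail, splitting $(-k)_{l+s}=(-k)_{l}(-(k-l))_{s}$, identifying the inner $s$-sum with $Z_{k-l}^{(-p-q)}(z;\upsilon)$ via (\ref{Zdef}), and simplifying with $(-k)_{l}(k-l)!=(-1)^{l}k!$. One small caveat: your closing remark that the arguments $\upsilon s-p-q+1$ stay away from the non-positive integers is not actually guaranteed under the stated hypotheses (indeed $1-p-q$ may be a non-positive integer), but this does no harm since $1/\Gamma$ is entire and the quotient $Z_{k-l}^{(-p-q)}(z;\upsilon)/\Gamma(\upsilon(k-l)-p-q+1)$ is well defined as the finite sum $\frac{1}{(k-l)!}\sum_{s}(-1)^{s}\binom{k-l}{s}z^{\upsilon s}/\Gamma(\upsilon s-p-q+1)$.
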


\begin{proof}
The proof is completed from the definitions of the polynomials $%
_{K}M_{k;\upsilon }^{\left( p,q\right) }\left( y,z\right) $ and $%
Z_{k}^{\left( c\right) }\left( z;\upsilon \right) $.
\end{proof}

On the other hand, $F_{k}\left( y,z\right) $ is as follow%
\begin{equation}
F_{k}\left( y,z\right) :=\ _{K}\mathcal{M}_{k;\upsilon }^{\left( p,q\right)
}\left( y,z\right) =M_{k}^{\left( p,q\right) }\left( y\right)
\sum\limits_{j=0}^{k}Y_{j}^{\left( -p-q\right) }\left( z;\upsilon \right) .
\label{Qdef}
\end{equation}

\begin{corollary}
By Theorem 3, we deduce that the finite bivariate M - Konhauser polynomials $%
_{K}M_{k;\upsilon }^{\left( p,q\right) }\left( y,z\right) $ and the
polynomials$\ _{K}\mathcal{M}_{k;\upsilon }^{\left( p,q\right) }\left(
y,z\right) $ together are a pair of finite bivariate biorthogonal
polynomials with $\rho \left( y,z\right) =y^{q}\left( 1+y\right) ^{-\left(
p+q\right) }e^{-z}z^{-p-q}$ on $\left( 0,\infty \right) \times \left(
0,\infty \right) $.
\end{corollary}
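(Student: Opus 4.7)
The plan is to derive the corollary as an immediate application of Theorem 3 once the one-variable ingredients are identified, since the two representations (\ref{MZrel}) and (\ref{Qdef}) are already cast in the shape of (\ref{PolP})–(\ref{PolQ}). For the biorthogonal pair in the variable $z$ I take the Konhauser polynomials $G_r(z):=Z_r^{(-p-q)}(z;\upsilon)$ and $Q_k(z):=Y_k^{(-p-q)}(z;\upsilon)$; with $c=-p-q$ the relation (\ref{3}) reads
\begin{equation*}
\int_0^{\infty} e^{-z}z^{-p-q}\,Z_k^{(-p-q)}(z;\upsilon)\,Y_r^{(-p-q)}(z;\upsilon)\,dz=\frac{\Gamma(\upsilon k-p-q+1)}{k!}\,\delta_{k,r},
\end{equation*}
so $\rho(z)=e^{-z}z^{-p-q}$ and $J_{k,k}=\Gamma(\upsilon k-p-q+1)/k!$. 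For the one-variable orthogonal polynomial I take $D_k(y):=M_k^{(p,q)}(y)$, whose finite orthogonality over $(0,\infty)$ against $y^{q}(1+y)^{-(p+q)}$ is exactly (\ref{Mort}); reading off the defining sum of $M_k^{(p,q)}$ gives the expansion $D_k(y)=\sum_{l=0}^{k}E_{k,l}\,(d(y))^l$ with $d(y)=y$ and
\begin{equation*}
E_{k,l}=(-1)^{k}\,\Gamma(k+q+1)\,\frac{(-k)_l(k+1-p)_l}{l!\,\Gamma(l+q+1)}.
\end{equation*}

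Next I check that the bivariate families produced by (\ref{PolP})–(\ref{PolQ}) coincide with our two sets. The match of (\ref{PolQ}) with (\ref{Qdef}) is immediate from the choice $D_k(y)=M_k^{(p,q)}(y)$ and $Q_j(z)=Y_j^{(-p-q)}(z;\upsilon)$. For (\ref{PolP}), substituting the $E_{k,l}$ and $J_{k-l,k-l}=\Gamma(\upsilon(k-l)-p-q+1)/(k-l)!$ above into
\begin{equation*}
P_k(y,z)=\sum_{l=0}^{k}\frac{E_{k,l}}{J_{k-l,k-l}}\,y^{l}\,Z_{k-l}^{(-p-q)}(z;\upsilon)
\end{equation*}
and simplifying by the Pochhammer identity $(-k)_l(k-l)!=(-1)^{l}k!$ collapses the sum to the right-hand side of (\ref{MZrel}); by Theorem 5 this is $\ _{K}M_{k;\upsilon}^{(p,q)}(y,z)$.

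With both identifications in place, Theorem 3 applies directly and yields the claimed biorthogonality of $\ _{K}M_{k;\upsilon}^{(p,q)}$ and $\ _{K}\mathcal{M}_{k;\upsilon}^{(p,q)}$ against the product weight $\rho(y,z)=y^{q}(1+y)^{-(p+q)}e^{-z}z^{-p-q}$ on $(0,\infty)\times(0,\infty)$, as asserted. The only real care needed is in the sign and factorial bookkeeping when collapsing (\ref{PolP}) onto (\ref{MZrel}): rewriting $(-y)^l=(-1)^l y^l$ in (\ref{MZrel}) and then tracking the $(-1)^l$ produced by $(-k)_l(k-l)!=(-1)^l k!$ shows that the basic polynomial in the general scheme is consistently $d(y)=y$, so the constants $E_{k,l}/J_{k-l,k-l}$ reproduce precisely the coefficients of (\ref{MZrel}). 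Beyond this purely algebraic check, no new analytic input is required: the result is an invocation of Theorem 3 with univariate inputs supplied by (\ref{3}) and (\ref{Mort}).
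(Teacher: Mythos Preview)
Your proposal is correct and is precisely the argument the paper intends: the Corollary has no separate proof in the paper beyond the phrase ``By Theorem 3, we deduce\ldots'', and you have supplied exactly the verification that the univariate ingredients---the Konhauser pair $Z_r^{(-p-q)},Y_k^{(-p-q)}$ with weight $e^{-z}z^{-p-q}$ from (\ref{3}) and the finite family $M_k^{(p,q)}$ with weight $y^q(1+y)^{-(p+q)}$ from (\ref{Mort})---feed into Theorem~3 to produce (\ref{MZrel}) and (\ref{Qdef}). The paper then confirms the explicit biorthogonality constant separately in Theorem~7 by a direct double-integral computation, but that is an independent result rather than the proof of the present Corollary.
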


\begin{theorem}
The finite bivariate M - Konhauser polynomials $_{K}M_{k;\upsilon }^{\left(
p,q\right) }\left( y,z\right) $ have the finite biorthogonality relation%
\begin{equation}
\int\limits_{0}^{\infty }\int\limits_{0}^{\infty }y^{q}\left( 1+y\right)
^{-\left( p+q\right) }e^{-z}z^{-p-q}\ _{K}M_{k;\upsilon }^{\left( p,q\right)
}\left( y,z\right) \ _{K}\mathcal{M}_{r;\upsilon }^{\left( p,q\right)
}\left( y,z\right) dzdy=\frac{k!\Gamma \left( p-k\right) \Gamma \left(
q+k+1\right) }{\left( p-2k-1\right) \Gamma \left( p+q-k\right) }\delta
_{k,r},  \label{MKort}
\end{equation}%
where $k,r=0,1,...,R<\frac{p-1}{2},\ q>-1$.
\end{theorem}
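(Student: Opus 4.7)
The off-diagonal vanishing ($I_{k,r}=0$ for $k\neq r$) is already recorded in Corollary 6, which follows from Theorem 3 applied to the building blocks $D_k=M_k^{(p,q)}$ (orthogonal with respect to $y^q(1+y)^{-(p+q)}$ on $(0,\infty)$ by \eqref{Mort}) and the Konhauser biorthogonal polynomials $Z_k^{(-p-q)}(z;\upsilon)$ and $Y_k^{(-p-q)}(z;\upsilon)$ (biorthogonal on $(0,\infty)$ with weight $e^{-z}z^{-p-q}$ by \eqref{3}); the representation \eqref{MZrel} exhibits $_{K}M_{k;\upsilon}^{(p,q)}$ in the form required by that theorem. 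What remains is to evaluate the diagonal $I_{k,k}$.

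The plan is to substitute \eqref{MZrel} into the integrand and apply Fubini. Setting
\begin{equation*}
A_l=\int_0^\infty y^q(1+y)^{-(p+q)}(-y)^l M_k^{(p,q)}(y)\,dy,\qquad B_{k-l,k}=\int_0^\infty e^{-z}z^{-p-q}\,Z_{k-l}^{(-p-q)}(z;\upsilon)\sum_{j=0}^{k}Y_j^{(-p-q)}(z;\upsilon)\,dz,
\end{equation*}
the Konhauser biorthogonality \eqref{3} collapses the $j$-sum in $B_{k-l,k}$ to the single term $j=k-l$ (which belongs to $\{0,\dots,k\}$ for every admissible $l$), producing $B_{k-l,k}=\Gamma(\upsilon(k-l)-p-q+1)/(k-l)!$; this exactly cancels the matching gamma factor carried over from \eqref{MZrel}. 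What is left is
\begin{equation*}
I_{k,k}=(-1)^k k!\,\Gamma(k+q+1)\sum_{l=0}^{k}\frac{(-1)^l(k+1-p)_l}{l!\,\Gamma(l+q+1)\,(k-l)!}\,A_l.
\end{equation*}

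Next, the univariate orthogonality \eqref{Mort} eliminates every term with $l<k$, since $(-y)^l$ is a polynomial of degree below $k$. For the single surviving term $l=k$, extracting the leading coefficient of $M_k^{(p,q)}$ (via $(-k)_k=(-1)^k k!$) and writing $y^k=\mathrm{LC}(M_k)^{-1}M_k+R_{k-1}$, orthogonality annihilates the cross-term and yields $A_k=\pm\|M_k\|^2/\mathrm{LC}(M_k)$ with a sign inherited from $(-y)^k$; all the remaining sign and factorial factors then cancel to leave $I_{k,k}=\|M_k\|^2$, which by \eqref{Mort} is the asserted right-hand side. The main bookkeeping obstacle is the careful tracking of the $(-1)$'s arising from $(-y)^l$, the Pochhammer identity $(-k)_l=(-1)^l k!/(k-l)!$, and $\mathrm{LC}(M_k)$; a secondary technical point is that the $z$-integral should be read through analytic continuation of the Konhauser biorthogonality formula in its parameter $c=-p-q$, since for the parameter regime where $M_k$ is orthogonal the weight $e^{-z}z^{-p-q}$ is singular at $z=0$ and only the polynomial factor $Z_{k-l}\sum_jY_j$ actually enters the $z$-integral.
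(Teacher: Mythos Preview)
Your proof is correct and follows essentially the same route as the paper's: expand $_{K}M_{k;\upsilon}^{(p,q)}$ via \eqref{MZrel}, separate the $y$- and $z$-integrals, collapse the $z$-integral with the Konhauser biorthogonality \eqref{3}, and finish with \eqref{Mort}. The only difference is cosmetic: rather than killing the terms $l<k$ individually and extracting the leading coefficient at $l=k$, the paper observes that after the $z$-cancellations the remaining $l$-sum is precisely the hypergeometric series defining $M_k^{(p,q)}(y)$, so the double integral reduces in one stroke to $\int_0^\infty y^q(1+y)^{-(p+q)}M_r^{(p,q)}(y)M_k^{(p,q)}(y)\,dy$, handling the diagonal and off-diagonal cases simultaneously and sidestepping the sign bookkeeping you flag. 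Your caveat about reading \eqref{3} at the parameter $c=-p-q$ via analytic continuation is a legitimate technical point that the paper leaves implicit.
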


\begin{proof}
With the help of definitions (\ref{MZrel}) and (\ref{Qdef}), we write%
\begin{align*}
& \int\limits_{0}^{\infty }\int\limits_{0}^{\infty }y^{q}\left( 1+y\right)
^{-\left( p+q\right) }e^{-z}z^{-p-q}\ _{K}M_{k;\upsilon }^{\left( p,q\right)
}\left( y,z\right) \ _{K}\mathcal{M}_{r;\upsilon }^{\left( p,q\right)
}\left( y,z\right) dydz \\
& =\left( -1\right) ^{k}\Gamma \left( k+q+1\right) \sum_{s=0}^{k}\frac{%
\left( -1\right) ^{s}\left( -k\right) _{s}\left( k-p+1\right) _{s}}{s!\Gamma
\left( s+q+1\right) }\int\limits_{0}^{\infty }y^{q+s}\left( 1+y\right)
^{-\left( p+q\right) }M_{r}^{\left( p,q\right) }\left( y\right) dy \\
& \times \frac{\left( k-s\right) !}{\Gamma \left( \upsilon \left( k-s\right)
-p-q+1\right) }\sum_{j=0}^{r}\int\limits_{0}^{\infty
}e^{-z}z^{-p-q}Z_{k-s}^{\left( -p-q\right) }\left( z;\upsilon \right)
Y_{j}^{\left( -p-q\right) }\left( z;\upsilon \right) dz \\
& =\int\limits_{0}^{\infty }y^{q}\left( 1+y\right) ^{-\left( p+q\right)
}M_{r}^{\left( p,q\right) }\left( y\right) M_{k}^{\left( p,q\right) }\left(
y\right) dy.
\end{align*}%
Using (\ref{Mort}), the proof is completed.
\end{proof}

The Kampe de Feriet's double hypergeometric functions \cite{SD} are defined
as%
\begin{equation*}
F_{k,s,r}^{n,p,q}\left( \QATOP{\left( a_{n}\right) :\left( b_{p}\right)
;\left( c_{q}\right) ;}{\left( d_{k}\right) :\left( f_{s}\right) ;\left(
g_{r}\right) ;}y;z\right) =\sum\limits_{m,l=0}^{\infty }\frac{%
\prod\limits_{i=0}^{n}\left( a_{i}\right) _{m+l}\prod\limits_{i=0}^{p}\left(
b_{i}\right) _{m}\prod\limits_{i=0}^{q}\left( c_{i}\right) _{l}\ y^{l}z^{m}}{%
\prod\limits_{i=0}^{k}\left( d_{i}\right) _{m+l}\prod\limits_{i=0}^{s}\left(
f_{i}\right) _{m}\prod\limits_{i=0}^{r}\left( f_{i}\right) _{l}\ l!m!},
\end{equation*}%
and thus we get the following series representation for the polynomials$\
_{K}M_{k;\upsilon }^{\left( p,q\right) }\left( y,z\right) $.

\begin{theorem}
The finite bivariate M - Konhauser polynomials$\ _{K}M_{k;\upsilon }^{\left(
p,q\right) }\left( y,z\right) $ can be written in terms of the Kampe de
Feriet's double hypergeometric functions by%
\begin{equation*}
\ _{K}M_{k;\upsilon }^{\left( p,q\right) }\left( y,z\right) =\frac{\left(
-1\right) ^{k}\left( q+1\right) _{k}}{\Gamma \left( 1-p-q\right) }%
F_{0,1,\upsilon }^{1,1,0}\left( \QATOP{-k:k-p+1;-;}{-:q+1;\Delta \left(
\upsilon ;1-p-q\right) ;}-y;\left( \frac{z}{\upsilon }\right) ^{\upsilon
}\right) ,
\end{equation*}%
where $\Delta \left( \upsilon ;\sigma \right) $ denotes the $\upsilon $
parameters $\frac{\sigma }{\upsilon },\frac{\sigma +1}{\upsilon },...,\frac{%
\sigma +\upsilon -1}{\upsilon }$.
\end{theorem}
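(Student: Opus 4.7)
The plan is to work directly from the series representation (\ref{MKdef}) and rewrite each factor in the summand as a Pochhammer symbol, a monomial, or a factorial so that the result falls into the Kampe de Feriet template displayed just above the theorem. Most of this is clerical bookkeeping; there is one step where genuine input is required.

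First I would extract the constant prefactor. Using $\Gamma(k+q+1) = (q+1)_{k}\,\Gamma(q+1)$ and $\Gamma(l+q+1) = (q+1)_{l}\,\Gamma(q+1)$, the ratio collapses to $(q+1)_{k}/(q+1)_{l}$, which pulls $(-1)^{k}(q+1)_{k}$ outside the double sum and leaves a factor $1/(q+1)_{l}$ attached to $(-y)^{l}$ inside. The essential step is to convert $\Gamma(\upsilon s + 1 - p - q)$ into a Pochhammer symbol in the plain index $s$ (not $\upsilon s$). For this I would invoke the Gauss multiplication formula in the form
\begin{equation*}
\Gamma(\upsilon s + \sigma) \;=\; \Gamma(\sigma)\,\upsilon^{\upsilon s}\,\prod_{j=0}^{\upsilon-1}\left(\frac{\sigma+j}{\upsilon}\right)_{s}
\end{equation*}
with $\sigma = 1-p-q$, which gives
\begin{equation*}
\frac{1}{\Gamma(\upsilon s + 1 - p - q)} \;=\; \frac{1}{\Gamma(1-p-q)\,\upsilon^{\upsilon s}\,(\Delta(\upsilon;1-p-q))_{s}}.
\end{equation*}
The factor $1/\Gamma(1-p-q)$ joins the prefactor, and the $\upsilon^{-\upsilon s}$ combines with $z^{\upsilon s}$ to produce $(z/\upsilon)^{\upsilon s}$; this is precisely what forces the second variable in the theorem to be $(z/\upsilon)^{\upsilon}$ rather than $z$.

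After these substitutions the summand contains only Pochhammer symbols and monomials, and the finite constraints $0 \le l$, $0 \le s$, $l+s \le k$ can be relaxed to $l,s \ge 0$ since $(-k)_{l+s}$ already vanishes for $l+s > k$. Comparing with the Kampe de Feriet series defined just above the theorem: $-k$ is the single upper parameter attached to the combined index $l+s$ (giving superscript $1$), $k-p+1$ is the single upper parameter attached to the first variable $-y$ (middle superscript $1$, with the remaining numerator block empty, hence $0$), $q+1$ is the single lower parameter attached to $-y$, and the $\upsilon$-tuple $\Delta(\upsilon;1-p-q)$ supplies the lower parameters attached to $(z/\upsilon)^{\upsilon}$; the $(d)$-block is empty. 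This reproduces $F_{0,1,\upsilon}^{1,1,0}$ with exactly the parameter arrangement stated. The main obstacle is the multiplication-formula step, which is what generates the $\Delta(\upsilon;1-p-q)$ symbol; a secondary pitfall is keeping the two variables straight under the Kampe de Feriet convention, since attaching $(k-p+1)$ or $(q+1)$ to the wrong variable would silently yield a different, incorrect identity.
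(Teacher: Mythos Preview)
Your proposal is correct and follows essentially the same route as the paper. The paper's proof consists of a single sentence invoking the identity $(1+c)_{\upsilon r}=\upsilon^{\upsilon r}\prod_{j=0}^{\upsilon-1}\left(\frac{c+1+j}{\upsilon}\right)_{r}$ applied to the definition (\ref{MKdef}); this is exactly the Gauss multiplication formula you write in the form $\Gamma(\upsilon s+\sigma)=\Gamma(\sigma)\,\upsilon^{\upsilon s}\prod_{j=0}^{\upsilon-1}\left(\frac{\sigma+j}{\upsilon}\right)_{s}$ with $\sigma=1-p-q$, and your remaining steps (extracting $(q+1)_{k}$ and $1/\Gamma(1-p-q)$, absorbing $\upsilon^{-\upsilon s}$ into the second variable, and dropping the finite summation limits via $(-k)_{l+s}$) simply spell out the bookkeeping the paper leaves implicit.
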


\begin{proof}
We use the identity $\left( 1+c\right) _{\upsilon r}=\upsilon ^{\upsilon
r}\prod\limits_{j=0}^{\upsilon -1}\left( \frac{c+1+j}{\upsilon }\right) _{r}$
in the definition (\ref{MKdef}) for$\ _{K}M_{k;\upsilon }^{\left( p,q\right)
}\left( y,z\right) $.
\end{proof}

\bigskip

Now, let us recall the definition of the bivariate Jacobi Konhauser
Mittag-Leffler functions $E_{p,q,\upsilon }^{\left( \gamma _{1};\gamma
_{2}\right) }\left( y,z\right) $ in below.

The bivariate Jacobi Konhauser Mittag-Leffler functions $E_{p,q,\upsilon
}^{\left( \gamma _{1};\gamma _{2}\right) }\left( y,z\right) $ is defined by
the formula \cite{OzEl2}%
\begin{equation}
E_{p,q,\upsilon }^{\left( \gamma _{1};\gamma _{2}\right) }\left( y,z\right)
=\sum_{l=0}^{\infty }\sum_{m=0}^{\infty }\frac{\left( \gamma _{1}\right)
_{m+l}\left( \gamma _{2}\right) _{l}y^{l}z^{\upsilon m}}{\Gamma \left(
p+l\right) \Gamma \left( q+\upsilon m\right) l!m!},  \label{Edef}
\end{equation}%
where $\upsilon ,p,q,\gamma _{1},\gamma _{2}\in 
\mathbb{C}
$, $\func{Re}\left( \upsilon \right) >0,\func{Re}\left( p\right) ,\func{Re}%
\left( q\right) ,\func{Re}\left( \gamma _{1}\right) >0,\func{Re}\left(
\gamma _{2}\right) >0.$

\bigskip

Thus, the following relation can be given between $_{K}M_{k;\upsilon
}^{\left( p,q\right) }\left( y,z\right) $ and $E_{p,q,\upsilon }^{\left(
\gamma _{1};\gamma _{2}\right) }\left( y,z\right) $.

\begin{theorem}
The following relation is satisfied between (\ref{MKdef}) and (\ref{Edef}):%
\begin{equation}
_{K}M_{k;\upsilon }^{\left( p,q\right) }\left( y,z\right) =\left( -1\right)
^{k}\Gamma \left( k+q+1\right) E_{q+1,1-p-q,\upsilon }^{\left(
-k;k+1-p\right) }\left( -y,z\right) .  \label{MErel}
\end{equation}
\end{theorem}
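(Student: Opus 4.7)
The plan is to verify the identity by direct substitution into the definition \eqref{Edef} of $E_{p,q,\upsilon}^{(\gamma_1;\gamma_2)}$ with the parameter assignment $p \mapsto q+1$, $q \mapsto 1-p-q$, $\gamma_1 \mapsto -k$, $\gamma_2 \mapsto k+1-p$, and arguments $(y,z) \mapsto (-y,z)$, and then matching the resulting series term-by-term against the definition \eqref{MKdef} of $_{K}M_{k;\upsilon}^{(p,q)}(y,z)$.

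First I would write out the right-hand side of \eqref{MErel} by plugging the chosen parameters into \eqref{Edef}:
\begin{equation*}
(-1)^{k}\Gamma(k+q+1)\, E_{q+1,1-p-q,\upsilon}^{(-k;k+1-p)}(-y,z) = (-1)^{k}\Gamma(k+q+1)\sum_{l=0}^{\infty}\sum_{m=0}^{\infty}\frac{(-k)_{m+l}(k+1-p)_{l}\,(-y)^{l}z^{\upsilon m}}{\Gamma(l+q+1)\Gamma(\upsilon m-p-q+1)\,l!\,m!}.
\end{equation*}
The key observation is that the Pochhammer symbol $(-k)_{m+l}$ vanishes whenever $m+l>k$, since $k$ is a nonnegative integer. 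Therefore the double series in fact terminates, and only the indices with $l+m\le k$ contribute. Reindexing $m$ as $s$ and writing $l=0,1,\dots,k$ together with $s=0,1,\dots,k-l$ replaces the infinite double sum with the finite one appearing in \eqref{MKdef}.

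Comparing the surviving summand with the summand of \eqref{MKdef}, namely
\begin{equation*}
(-1)^{k}\Gamma(k+q+1)\,\frac{(-k)_{l+s}(k+1-p)_{l}(-y)^{l}z^{\upsilon s}}{\Gamma(l+q+1)\Gamma(\upsilon s-p-q+1)\,l!\,s!},
\end{equation*}
we see the two expressions agree factor by factor. This establishes \eqref{MErel}.

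There is no substantive obstacle: the proof is essentially a bookkeeping verification, and the only subtle point is to recognize that the Pochhammer symbol $(-k)_{m+l}$ automatically enforces the truncation $l+m\le k$, which converts the doubly-infinite series defining $E_{q+1,1-p-q,\upsilon}^{(-k;k+1-p)}$ into the finite double sum in \eqref{MKdef}. Once this is noted, the matching of the remaining Pochhammer, Gamma, factorial, and power factors is immediate from the parameter assignment.
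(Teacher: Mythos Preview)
Your proof is correct and follows exactly the approach the paper intends: the paper's own proof is the single line ``Considering the definitions (\ref{MKdef}) and (\ref{Edef}) together, (\ref{MErel}) can be seen easily,'' and you have simply spelled out that comparison in detail, including the observation that $(-k)_{m+l}$ truncates the double series.
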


\begin{proof}
Considering the definitions (\ref{MKdef}) and (\ref{Edef}) together, (\ref%
{MErel}) can be seen easily.
\end{proof}

\bigskip

On the other hand, we know the following relation \cite{Masjed} between the
classical orthogonal Jacobi polynomials and the finite univariate orthogonal
polynomials $M_{k}^{\left( p,q\right) }\left( y\right) $:%
\begin{align*}
M_{k}^{\left( p,q\right) }\left( y\right) =\left( -1\right)
^{k}k!P_{k}^{\left( q,-p-q\right) }\left( 2y+1\right) & \\
\Leftrightarrow P_{k}^{\left( p,q\right) }\left( y\right) =\frac{\left(
-1\right) ^{k}}{k!}M_{k}^{\left( -p-q,p\right) }\left( \frac{y-1}{2}\right)
.&
\end{align*}%
In the present paper, we present a similar relation between the finite
bivariate M - Konhauser polynomials and the bivariate Jacobi Konhauser
polynomials in the following theorem.

\begin{theorem}
The following relation holds between the pairs for the finite bivariate
biorthogonal M - Konhauser polynomials and the bivariate biorthogonal Jacobi
Konhauser polynomials:%
\begin{align}
\ _{K}M_{k;\upsilon }^{\left( p,q\right) }\left( y,z\right) =\left(
-1\right) ^{k}k!\ _{K}P_{k;\upsilon }^{\left( q,-p-q\right) }\left(
2y+1,z\right) &  \label{MJrel} \\
\Leftrightarrow \ _{K}P_{k;\upsilon }^{\left( p,q\right) }\left( y,z\right) =%
\frac{\left( -1\right) ^{k}}{k!}\ _{K}M_{k;\upsilon }^{\left( -p-q,p\right)
}\left( \frac{y-1}{2},z\right) &  \notag
\end{align}%
and%
\begin{align*}
\ _{K}\mathcal{M}_{k;\upsilon }^{\left( p,q\right) }\left( y,z\right)
=\left( -1\right) ^{k}k!\ Q_{k;\upsilon }^{\left( q,-p-q\right) }\left(
2y+1,z\right) & \\
\Leftrightarrow \ Q_{k;\upsilon }^{\left( p,q\right) }\left( y,z\right) =%
\frac{\left( -1\right) ^{k}}{k!}\ _{K}\mathcal{M}_{k;\upsilon }^{\left(
-p-q,p\right) }\left( \frac{y-1}{2},z\right) .&
\end{align*}
\end{theorem}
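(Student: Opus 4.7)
The plan is to prove both equivalences in the theorem by direct substitution into the defining double series, exactly mirroring the classical univariate identity $M_k^{(p,q)}(y) = (-1)^k k!\,P_k^{(q,-p-q)}(2y+1)$ recalled just before the statement. For the first equivalence in (\ref{MJrel}), I would begin with the series definition of $_{K}P_{k;\upsilon}^{(p,q)}(y,z)$ given earlier in the excerpt, perform the parameter replacement $(p,q)\mapsto(q,-p-q)$ and the argument replacement $y\mapsto 2y+1$, and track how each factor transforms: the Pochhammer symbol $(k+p+q+1)_l$ collapses to $(k+q-p-q+1)_l=(k-p+1)_l$, the factor $\Gamma(\upsilon s+q+1)$ becomes $\Gamma(\upsilon s-p-q+1)$, and $\left(\tfrac{1-y}{2}\right)^l$ becomes $\left(\tfrac{1-(2y+1)}{2}\right)^l=(-y)^l$, while the external prefactor $\Gamma(k+p+1)/k!$ becomes $\Gamma(k+q+1)/k!$. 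Multiplying by $(-1)^k k!$ then matches term-by-term the defining double sum (\ref{MKdef}) for $_{K}M_{k;\upsilon}^{(p,q)}(y,z)$.

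The converse direction in (\ref{MJrel}) is obtained by inverting this substitution: apply the first identity with the parameters $(p,q)\mapsto(-p-q,p)$ and the argument $y\mapsto(y-1)/2$ (so that $2y+1$ becomes the original $y$), then solve for $_{K}P_{k;\upsilon}^{(p,q)}(y,z)$ and divide by $(-1)^k k!$. This is the exact bivariate analogue of the two-way display preceding the theorem, and the arithmetic is identical.

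For the second pair of identities involving $_{K}\mathcal{M}_{k;\upsilon}^{(p,q)}$ and $Q_{k;\upsilon}^{(p,q)}$, the argument is shorter because the polynomials factorize. Using (\ref{Qdef}), write
\[
{}_{K}\mathcal{M}_{k;\upsilon}^{(p,q)}(y,z)=M_{k}^{(p,q)}(y)\sum_{j=0}^{k}Y_{j}^{(-p-q)}(z;\upsilon),
\]
apply the univariate identity $M_k^{(p,q)}(y)=(-1)^k k!\,P_k^{(q,-p-q)}(2y+1)$ to the first factor, and recognize the resulting product as $(-1)^k k!$ times $P_k^{(q,-p-q)}(2y+1)\sum_{j=0}^{k}Y_j^{(-p-q)}(z;\upsilon)$, which by the analogous factorization of $Q_{k;\upsilon}^{(q,-p-q)}(2y+1,z)$ is precisely $(-1)^k k!\,Q_{k;\upsilon}^{(q,-p-q)}(2y+1,z)$. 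The converse direction is again obtained by inverting the parameter/argument map.

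The only real obstacle is bookkeeping: one must be careful to verify that all three simultaneous substitutions $(p,q)\mapsto(q,-p-q)$, $y\mapsto 2y+1$, and the Konhauser parameter $q\mapsto -p-q$ in the Y--Konhauser factor are mutually consistent and produce exactly the factors in (\ref{MKdef}) and (\ref{Qdef}). No convergence or analytic issues arise since all sums are finite and the identities are polynomial, so the verification is entirely algebraic.
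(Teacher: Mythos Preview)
Your proposal is correct and constitutes the natural argument: the paper itself states this theorem without supplying a separate proof, since both identities follow immediately by substituting the indicated parameters and variables into the defining double sums and comparing term by term, exactly as you outline. Your treatment of the second pair via the factorized form (\ref{Qdef}) together with the univariate relation $M_k^{(p,q)}(y)=(-1)^k k!\,P_k^{(q,-p-q)}(2y+1)$ is likewise the intended route.
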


\subsection{Operational and integral representation and some other
properties for the finite bivariate biorthogonal M - Konhauser polynomials}

It is known that%
\begin{equation*}
D_{y}^{-1}f\left( y\right) =\int\limits_{0}^{y}f\left( \xi \right) d\xi .
\end{equation*}%
\bigskip Operational representation for the bivariate Jacobi Konhauser
polynomials$\ _{K}P_{k}^{\left( p,q\right) }\left( y,z\right) $ is given by 
\cite{OzEl2}%
\begin{align}
\ _{K}P_{k}^{\left( p,q\right) }\left( y,z\right) & =\frac{\Gamma \left(
p+k+1\right) }{k!}\left( \frac{1-y}{2}\right) ^{-p}z^{-q}\left(
1-D_{z}^{-\upsilon }\right) ^{k}  \label{Poprep} \\
& \times \ _{2}F_{0}\left( -k;k+p+q+1;-;\frac{-2}{D_{y}\left(
1-D_{z}^{-\upsilon }\right) }\right) \left\{ \frac{\left( \frac{1-y}{2}%
\right) ^{p}z^{q}}{\Gamma \left( p+1\right) \Gamma \left( q+1\right) }%
\right\}  \notag \\
& =\frac{\Gamma \left( p+k+1\right) }{k!}z^{-q}\left( \frac{1-y}{2}\right)
^{-p}\left( 1-D_{z}^{-\upsilon }\right) ^{k}  \notag \\
& \times Y_{k}\left( \frac{2}{D_{y}\left( 1-D_{z}^{-\upsilon }\right) }%
;p+q+2,1\right) \left\{ \frac{\left( \frac{1-y}{2}\right) ^{p}z^{q}}{\Gamma
\left( p+1\right) \Gamma \left( q+1\right) }\right\} ,  \notag
\end{align}%
where $_{2}F_{0}$ is a special case of the generalized hypergeometric
function and $Y_{k}\left( z;a,b\right) =\ _{2}F_{0}\left[ -k,k+a-1;-,\frac{-z%
}{b}\right] $ are the Bessel polynomials.

\begin{theorem}
The finite bivariate M - Konhauser polynomials $_{K}M_{k;\upsilon }^{\left(
p,q\right) }\left( y,z\right) $ have the following operational representation%
\begin{align*}
\ _{K}M_{k;\upsilon }^{\left( p,q\right) }\left( y,z\right) & =\left(
-1\right) ^{k}\Gamma \left( q+1+k\right) y^{-q}z^{p+q}\left(
1-D_{z}^{-\upsilon }\right) ^{k} \\
& \times \ _{2}F_{0}\left[ -k,k+1-p;-;\frac{-1}{D_{y}\left(
1-D_{z}^{-\upsilon }\right) }\right] \left\{ \frac{y^{q}z^{-p-q}}{\Gamma
\left( q+1\right) \Gamma \left( 1-p-q\right) }\right\} .
\end{align*}
\end{theorem}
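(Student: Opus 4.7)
The plan is to verify the claim by direct expansion of the operator representation and matching against the series definition (\ref{MKdef}). A natural alternative is to use the transition relation (\ref{MJrel}) from Theorem 8 together with the operational representation (\ref{Poprep}) of the Jacobi-Konhauser polynomials; however, that route is delicate because the affine substitution $y\mapsto 2y+1$ inside the operator $D_y^{-1}$ does not commute cleanly with the integration-constant convention implicit in (\ref{Poprep}) (a naive chain-rule translation of $D_{2y+1}$ into $D_y$ introduces spurious powers of $2$ that appear to conflict with the constant $-1$ in the claimed $_{2}F_{0}$-argument).

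First, I would expand the terminating $_{2}F_{0}$ in the claim as $\sum_{n=0}^{k}\frac{(-k)_n(k+1-p)_n(-1)^n}{n!}\,D_y^{-n}(1-D_z^{-\upsilon})^{-n}$, merge the $(1-D_z^{-\upsilon})^{-n}$ with the outer $(1-D_z^{-\upsilon})^k$ into $(1-D_z^{-\upsilon})^{k-n}$, and apply the resulting operator to the seed $\tfrac{y^{q}z^{-p-q}}{\Gamma(q+1)\Gamma(1-p-q)}$. Using $D_y^{-n}y^{q}=\tfrac{\Gamma(q+1)}{\Gamma(q+n+1)}y^{q+n}$ together with the binomial expansion of $(1-D_z^{-\upsilon})^{k-n}$ and $D_z^{-\upsilon m}z^{-p-q}=\tfrac{\Gamma(1-p-q)}{\Gamma(\upsilon m+1-p-q)}z^{\upsilon m-p-q}$ produces a finite double sum indexed by $(n,m)$.

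After multiplying by the outside prefactor $(-1)^k\Gamma(q+1+k)\,y^{-q}z^{p+q}$, the $y^{-q}$ and $z^{p+q}$ cancel against the corresponding powers in the seed, leaving a polynomial $\sum_{l,s}C_{l,s}\,y^{l}z^{\upsilon s}$ over the indices $0\le l\le k$, $0\le s\le k-l$. The last step is to match $C_{l,s}$ term-by-term against the coefficient of $y^{l}z^{\upsilon s}$ in (\ref{MKdef}).

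The matching reduces to the classical Pochhammer identity $(-k)_{l+s}=(-k)_{l}\,(-k+l)_{s}$ combined with $(-k+l)_{s}=(-1)^{s}s!\binom{k-l}{s}$; together these supply exactly the sign $(-1)^{l}$ that promotes the $y^{l}$ produced by the operators into the $(-y)^{l}$ appearing in (\ref{MKdef}), and simultaneously pin down the constant $-1$ inside the $_{2}F_{0}$ argument. The main obstacle is thus bookkeeping of signs and Gamma factors rather than any conceptual difficulty; once the Pochhammer identity is invoked, the two expressions agree coefficient-by-coefficient and the theorem follows.
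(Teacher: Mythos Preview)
Your direct-expansion argument is correct and does establish the theorem. The paper, by contrast, gives a one-line proof: it simply invokes the transition relation (\ref{MJrel}) together with the Jacobi--Konhauser operational representation (\ref{Poprep}) and declares the result to follow. Your route is genuinely different---more elementary and fully self-contained---and it sidesteps precisely the issue you flag: the paper's argument treats $D_{y}^{-1}$ as a purely formal inverse that transports under the affine change $y\mapsto 2y+1$ without worrying about base points of integration, whereas your computation uses only the concrete rule $D_{y}^{-n}y^{q}=\tfrac{\Gamma(q+1)}{\Gamma(q+n+1)}y^{q+n}$ stated at the top of the subsection. What the paper's approach buys is brevity and a visible structural link to the Jacobi--Konhauser case; what your approach buys is that no interpretation of $D^{-1}$ under substitution is needed at all.

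One small expository correction in your final paragraph: the Pochhammer identity $(-k+l)_{s}=(-1)^{s}s!\binom{k-l}{s}$ supplies the sign $(-1)^{s}$, which matches the $(-1)^{m}$ arising from the binomial expansion of $(1-D_{z}^{-\upsilon})^{k-n}$. The sign $(-1)^{l}$ that converts $y^{l}$ into $(-y)^{l}$ in (\ref{MKdef}) is already present on the operator side as the $(-1)^{n}$ coming from the argument $-1$ of the ${}_{2}F_{0}$; it is not produced by the Pochhammer step. Swapping these attributions does not affect the validity of the coefficient match, but you should straighten out the bookkeeping in the write-up.
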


\begin{proof}
The proof follows by (\ref{MJrel}) and (\ref{Poprep}).
\end{proof}

For the finite bivariate M - Konhauser polynomials $_{K}M_{k;\upsilon
}^{\left( p,q\right) }\left( y,z\right) $, the following operational
representation in terms of Bessel polynomials holds:%
\begin{align*}
\ _{K}M_{k;\upsilon }^{\left( p,q\right) }\left( y,z\right) & =\left(
-1\right) ^{k}\Gamma \left( k+q+1\right) y^{-q}z^{p+q}\left(
1-D_{z}^{-\upsilon }\right) ^{k} \\
& \times Y_{k}\left( \frac{1}{D_{y}\left( 1-D_{z}^{-\upsilon }\right) }%
;2-p,1\right) \left\{ \frac{y^{q}z^{-p-q}}{\Gamma \left( q+1\right) \Gamma
\left( 1-p-q\right) }\right\} .
\end{align*}%
\bigskip Let us recall the definitions for the incomplete Gamma and the
Gamma functions \cite{Erdelyi, WW}:%
\begin{equation}
\frac{1}{\Gamma \left( y\right) }=\frac{1}{2\pi i}\int\limits_{-\infty
}^{0^{+}}e^{u}u^{-y}du,\ \ \ \ \ \left( \left\vert \arg \left( u\right)
\right\vert \leq \pi \right)  \label{PGamma}
\end{equation}%
and%
\begin{equation}
\Gamma \left( y\right) =\int\limits_{0}^{\infty }e^{-u}u^{y-1}du,\ \ \ \ \
\left( \func{Re}\left( y\right) >0\right) .  \label{Gamma}
\end{equation}%
\bigskip Integral representation holds for$\ _{K}P_{k}^{\left( p,q\right)
}\left( y,z\right) $ is given by \cite{OzEl2}%
\begin{align}
\ _{K}P_{k}^{\left( p,q\right) }\left( y,z\right) & =\frac{-1}{4\pi ^{2}k!}%
\frac{\Gamma \left( p+1+k\right) }{\Gamma \left( p+q+1+k\right) }%
\int\limits_{-\infty }^{0^{+}}\int\limits_{-\infty
}^{0^{+}}\int\limits_{0}^{\infty
}e^{-u_{1}+w_{1}+w_{2}}u_{1}^{k+p+q}w_{1}^{-p-1}w_{2}^{-q-1}  \label{Pintrep}
\\
& \times \left( 1-\left( \frac{z}{w_{2}}\right) ^{\upsilon }-\frac{%
u_{1}\left( 1-y\right) }{2w_{1}}\right) ^{k}du_{1}dw_{1}dw_{2}.  \notag
\end{align}

\begin{theorem}
The finite bivariate M - Konhauser polynomials have the integral
representation%
\begin{align*}
\ _{K}M_{k;\upsilon }^{\left( p,q\right) }\left( y,z\right) & =\frac{\left(
-1\right) ^{k+1}\Gamma \left( k+q+1\right) }{4\pi ^{2}\Gamma \left(
k+1-p\right) }\int\limits_{-\infty }^{0^{+}}\int\limits_{-\infty
}^{0^{+}}\int\limits_{0}^{\infty
}e^{-u_{1}+w_{1}+w_{2}}u_{1}^{k-p}w_{1}^{-q-1}w_{2}^{p+q-1} \\
& \times \left( \frac{w_{2}^{\upsilon }-z^{\upsilon }}{w_{2}^{\upsilon }}+%
\frac{u_{1}y}{w_{1}}\right) ^{k}du_{1}dw_{1}dw_{2}.
\end{align*}
\end{theorem}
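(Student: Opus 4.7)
The plan is to obtain the integral representation by direct substitution into the already-known integral representation (\ref{Pintrep}) for the bivariate Jacobi Konhauser polynomials, using the transition formula (\ref{MJrel}).

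First I would invoke the identity
\[
\ _{K}M_{k;\upsilon}^{(p,q)}(y,z)=(-1)^{k}k!\ \ _{K}P_{k;\upsilon}^{(q,-p-q)}(2y+1,z),
\]
from Theorem~9, which is the key transfer tool. Then I would apply (\ref{Pintrep}) with the parameter replacements $p\mapsto q$, $q\mapsto -p-q$, and the argument replacement $y\mapsto 2y+1$. The gamma prefactor becomes
\[
\frac{\Gamma(q+1+k)}{\Gamma(q+(-p-q)+1+k)}=\frac{\Gamma(k+q+1)}{\Gamma(k+1-p)},
\]
the monomial weights in $u_{1},w_{1},w_{2}$ collapse to $u_{1}^{k-p}$, $w_{1}^{-q-1}$, $w_{2}^{p+q-1}$, and the bracket $\bigl(1-(z/w_{2})^{\upsilon}-u_{1}(1-y)/(2w_{1})\bigr)^{k}$ simplifies with $1-(2y+1)=-2y$ to
\[
\left(1-\Bigl(\frac{z}{w_{2}}\Bigr)^{\upsilon}+\frac{u_{1}y}{w_{1}}\right)^{k}=\left(\frac{w_{2}^{\upsilon}-z^{\upsilon}}{w_{2}^{\upsilon}}+\frac{u_{1}y}{w_{1}}\right)^{k}.
\]
Multiplying through by the prefactor $(-1)^{k}k!$ from (\ref{MJrel}) produces $(-1)^{k+1}\Gamma(k+q+1)/\bigl(4\pi^{2}\Gamma(k+1-p)\bigr)$ in front (the $k!$ cancels against the $1/k!$ in (\ref{Pintrep})), which matches the claimed expression exactly.

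The only non-routine points to check are that the substitutions respect the contours and convergence conditions under which (\ref{Pintrep}) was originally derived, namely that the Hankel contours $(-\infty,0^{+})$ in $w_{1},w_{2}$ and the positive real line in $u_{1}$ remain valid when the shifted parameters $q$ and $-p-q$ replace the original $p$ and $q$. Since the finite-orthogonality constraints $p>2k+1$ and $q>-1$ guarantee $k-p<0$, $-q-1<0$, and $p+q-1$ is finite, the same representations (\ref{PGamma})-(\ref{Gamma}) of the reciprocal gamma and gamma functions apply and the triple integral converges in the same sense as in (\ref{Pintrep}). No obstacle of substance is expected; the calculation is bookkeeping once the parameter correspondence is fixed.
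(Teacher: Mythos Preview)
Your proposal is correct and follows exactly the paper's own approach: the paper's proof consists of the single sentence ``The proof can be easily seen with the help of (\ref{Pintrep}) and (\ref{MJrel}),'' and you have carried out precisely that substitution with all the bookkeeping made explicit. Your additional remark about the validity of the Hankel contours under the shifted parameters is a welcome elaboration that the paper omits.
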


\begin{proof}
The proof can be easily seen with the help of (\ref{Pintrep}) and (\ref%
{MJrel}).
\end{proof}

\bigskip

For the multiplication of the bivariate Jacobi Konhauser polynomials, the
integral representation \cite{OzEl2}%
\begin{align}
\ _{K}P_{r}^{\left( p,q\right) }\left( y,z\right) \ _{K}P_{k}^{\left(
\lambda ,\mu \right) }\left( y,z\right) =\frac{\Gamma \left( r+p+1\right)
\Gamma \left( k+\lambda +1\right) }{16\pi ^{4}r!k!\Gamma \left(
r+p+q+1\right) \Gamma \left( k+\lambda +\mu +1\right) }\ \ \ \ \ & 
\label{A} \\
\times \int\limits_{0}^{\infty }\int\limits_{0}^{\infty
}\int\limits_{-\infty }^{0^{+}}\int\limits_{-\infty
}^{0^{+}}\int\limits_{-\infty }^{0^{+}}\int\limits_{-\infty
}^{0^{+}}e^{-u_{1}-u_{2}+w_{1}+w_{2}+w_{3}+w_{4}}u_{1}^{r+p+q}u_{2}^{k+%
\lambda +\mu }w_{1}^{-\left( p+1\right) }w_{2}^{-\left( q+1\right)
}w_{3}^{-\left( \lambda +1\right) }w_{4}^{-\left( \mu +1\right) }&   \notag
\\
\times \left( 1-\frac{z^{\upsilon }}{w_{2}^{\upsilon }}-\frac{u_{1}\left(
1-y\right) }{2w_{1}}\right) ^{r}\left( 1-\frac{z^{\upsilon }}{%
w_{4}^{\upsilon }}-\frac{u_{2}\left( 1-y\right) }{2w_{3}}\right)
^{k}dw_{4}dw_{3}dw_{2}dw_{1}du_{2}du_{1}&   \notag
\end{align}%
holds true.

\begin{theorem}
The multiplication of the finite bivariate M - Konhauser polynomials have
the integral representation%
\begin{align*}
\ _{K}M_{r;\upsilon }^{\left( p,q\right) }\left( y,z\right) \
_{K}M_{k;\upsilon }^{\left( \lambda ,\mu \right) }\left( y,z\right) =\frac{%
\left( -1\right) ^{r+k}\Gamma \left( r+q+1\right) \Gamma \left( k+\mu
+1\right) }{16\pi ^{4}\Gamma \left( r+1-p\right) \Gamma \left( k+1-\lambda
\right) }\ \ \ \ \ \ \ \ \ \ \ \ \ \ \ \ \ \ \ \ \ \ \ \ \ \ \ \ \ \ & \\
\times \int\limits_{0}^{\infty }\int\limits_{0}^{\infty
}\int\limits_{-\infty }^{0^{+}}\int\limits_{-\infty
}^{0^{+}}\int\limits_{-\infty }^{0^{+}}\int\limits_{-\infty
}^{0^{+}}e^{-u_{1}-u_{2}+w_{1}+w_{2}+w_{3}+w_{4}}u_{1}^{r-p}u_{2}^{k-\lambda
}w_{1}^{-\left( q+1\right) }w_{2}^{-\left( 1-p-q\right) }w_{3}^{-\left( \mu
+1\right) }w_{4}^{-\left( 1-\lambda -\mu \right) }& \\
\times \left( 1-\frac{z^{\upsilon }}{w_{2}^{\upsilon }}-\frac{u_{1}\left(
1-y\right) }{2w_{1}}\right) ^{r}\left( 1-\frac{z^{\upsilon }}{%
w_{4}^{\upsilon }}-\frac{u_{2}\left( 1-y\right) }{2w_{3}}\right)
^{k}dw_{4}dw_{3}dw_{2}dw_{1}du_{2}du_{1}.&
\end{align*}
\end{theorem}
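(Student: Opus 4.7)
The plan is to reduce the claim directly to the product integral representation (\ref{A}) for the bivariate Jacobi Konhauser polynomials, via the transition relation of Theorem 8. First, applying (\ref{MJrel}) to each factor on the left-hand side yields
$$\ _{K}M_{r;\upsilon}^{(p,q)}(y,z)\ _{K}M_{k;\upsilon}^{(\lambda,\mu)}(y,z) = (-1)^{r+k}\,r!\,k!\ _{K}P_{r;\upsilon}^{(q,-p-q)}(2y+1,z)\ _{K}P_{k;\upsilon}^{(\mu,-\lambda-\mu)}(2y+1,z).$$
This reduces the task to invoking (\ref{A}) with the parameter substitutions $(p,q)\mapsto(q,-p-q)$ in the first Jacobi Konhauser factor and $(\lambda,\mu)\mapsto(\mu,-\lambda-\mu)$ in the second, together with the argument shift $y\mapsto 2y+1$.

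Second, I would simplify the Gamma-function prefactor after substitution. The ratios $\Gamma(r+p+1)/\Gamma(r+p+q+1)$ and $\Gamma(k+\lambda+1)/\Gamma(k+\lambda+\mu+1)$ of (\ref{A}) transform into $\Gamma(r+q+1)/\Gamma(r+1-p)$ and $\Gamma(k+\mu+1)/\Gamma(k+1-\lambda)$, respectively. The factorials $r!\,k!$ pulled out by the transition relation cancel the $r!\,k!$ appearing in the denominator of (\ref{A}), while the sign $(-1)^{r+k}$ combines with the constant $1/(16\pi^{4})$ to give exactly the prefactor in the statement.

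Third, inside the integrand the exponents of the auxiliary variables transform as $u_{1}^{r+p+q}\mapsto u_{1}^{r-p}$, $u_{2}^{k+\lambda+\mu}\mapsto u_{2}^{k-\lambda}$, $w_{1}^{-(p+1)}\mapsto w_{1}^{-(q+1)}$, $w_{2}^{-(q+1)}\mapsto w_{2}^{-(1-p-q)}$, $w_{3}^{-(\lambda+1)}\mapsto w_{3}^{-(\mu+1)}$, and $w_{4}^{-(\mu+1)}\mapsto w_{4}^{-(1-\lambda-\mu)}$, matching the statement on the nose. The two kernel factors inherit the argument shift $y\mapsto 2y+1$, under which $(1-y)/2$ becomes $-y$; so they go into their expected transformed form, and the integral identity emerges by term-by-term comparison.

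There is no genuine analytic obstacle here, since (\ref{A}) is taken as a known identity and the only step of substance is the bookkeeping above. The one thing that does require care is verifying that all substituted parameters remain in the domain of validity of (\ref{A}) and of the underlying gamma-function identities (\ref{PGamma})--(\ref{Gamma}); concretely, one should ensure $p>2r+1$, $\lambda>2k+1$, $q>-1$ and $\mu>-1$ so that the finite biorthogonality parameters are consistent with the convergence of the Hankel-type contour integrals. Once these ranges are noted, the derivation is a direct symbolic manipulation.
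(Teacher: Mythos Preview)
Your approach is exactly the paper's own: the proof there consists of the single line ``From (\ref{A}) and (\ref{MJrel}), the proof is completed,'' and your proposal is a faithful unpacking of that substitution together with the Gamma-function bookkeeping. One minor remark: carrying the shift $y\mapsto 2y+1$ through the kernel factors actually yields $1-z^{\upsilon}/w_{2}^{\upsilon}+u_{1}y/w_{1}$ and $1-z^{\upsilon}/w_{4}^{\upsilon}+u_{2}y/w_{3}$ (consistent with the single-factor integral representation given just before), so the printed statement appears to have left those kernels untransformed---a typographical slip in the statement rather than a flaw in your method.
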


\begin{proof}
From (\ref{A}) and (\ref{MJrel}), the proof is completed.\bigskip
\end{proof}

Bivariate Jacobi Konhauser polynomials have the following generating
function \cite{OzEl2}%
\begin{align}
\sum\limits_{k=0}^{\infty }\frac{\Gamma \left( q+1\right) \left(
p+q+1\right) _{k}}{\left( p+1\right) _{k}}\ _{K}P_{k}^{\left( p,q\right)
}\left( y,z\right) w^{k}=\frac{1}{\left( 1-w\right) ^{p+q+1}}\ \ \ \ \ \ \ \
\ \ \ \ \ \ \ \ \ \ \ \ \ \ \ \ &  \label{Jdog} \\
\times S_{0:0;0}^{1:0;0}\left[ \QATOP{\left[ p+q+1:2,1\right] :-;-;}{-:\left[
p+1:1\right] ;\left[ q+1:\upsilon \right] ;}\frac{w\left( y-1\right) }{%
2\left( 1-w\right) ^{2}},\frac{wz^{\upsilon }}{w-1}\right] ,&  \notag
\end{align}%
where $S_{k:l;m}^{K:L;M}$ denotes the double hypergeometric series \cite{SD2}%
.

\begin{theorem}
The finite bivariate M - Konhauser polynomials have the following generating
function%
\begin{align*}
\sum\limits_{k=0}^{\infty }\frac{\left( -1\right) ^{k}\left( 1-p\right) _{k}%
}{\left( q+1\right) _{k}}\ _{K}M_{k;\upsilon }^{\left( p,q\right) }\left(
y,z\right) \frac{w^{k}}{k!}=\frac{1}{\Gamma \left( 1-p-q\right) \left(
1-w\right) ^{1-p}}\ \ \ \ \ \ \ \ \ \ \ \ \ \ \ & \\
\times S_{0:0;0}^{1:0;0}\left[ \QATOP{\left[ 1-p:2,1\right] :-;-;}{-:\left[
q+1:1\right] ;\left[ 1-p-q:\upsilon \right] ;}\frac{wy}{\left( w-1\right)
^{2}},\frac{wz^{\upsilon }}{w-1}\right] .&
\end{align*}
\end{theorem}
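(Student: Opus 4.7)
The plan is to derive the identity as a direct consequence of the transition relation (\ref{MJrel}) together with the already-established generating function (\ref{Jdog}) for the bivariate Jacobi Konhauser polynomials. Since (\ref{MJrel}) writes $_{K}M_{k;\upsilon}^{(p,q)}(y,z)$ as $(-1)^{k}k!\ _{K}P_{k;\upsilon}^{(q,-p-q)}(2y+1,z)$, any generating identity for the Jacobi Konhauser family transfers, after a parameter relabeling and an affine change of variable, to the M Konhauser family.

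Concretely, I would start from (\ref{Jdog}) and perform the substitutions $p \mapsto q$, $q \mapsto -p-q$, $y \mapsto 2y+1$, leaving $z$ and $w$ untouched. I would then check how each factor transforms: the Pochhammer $(p+q+1)_{k}$ becomes $(1-p)_{k}$, the Pochhammer $(p+1)_{k}$ becomes $(q+1)_{k}$, the prefactor $\Gamma(q+1)$ becomes $\Gamma(1-p-q)$, and the power $(1-w)^{p+q+1}$ becomes $(1-w)^{1-p}$. Inside the Srivastava $S_{0:0;0}^{1:0;0}$ symbol the parameter blocks $[p+q+1:2,1]$, $[p+1:1]$, $[q+1:\upsilon]$ become $[1-p:2,1]$, $[q+1:1]$, $[1-p-q:\upsilon]$ respectively, while the first argument $\tfrac{w(y-1)}{2(1-w)^{2}}$ turns into $\tfrac{w(2y)}{2(1-w)^{2}} = \tfrac{wy}{(w-1)^{2}}$; the second argument $\tfrac{wz^{\upsilon}}{w-1}$ is unaffected. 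This produces a generating identity for $_{K}P_{k;\upsilon}^{(q,-p-q)}(2y+1,z)$.

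Next, I would invert (\ref{MJrel}) to write $_{K}P_{k;\upsilon}^{(q,-p-q)}(2y+1,z) = \tfrac{(-1)^{k}}{k!}\ _{K}M_{k;\upsilon}^{(p,q)}(y,z)$ and substitute this termwise into the specialized Jacobi identity. The resulting left-hand side reads
\[
\sum_{k=0}^{\infty}\frac{\Gamma(1-p-q)(1-p)_{k}}{(q+1)_{k}}\cdot\frac{(-1)^{k}}{k!}\ _{K}M_{k;\upsilon}^{(p,q)}(y,z)\,w^{k},
\]
and dividing through by $\Gamma(1-p-q)$ transfers this constant to the denominator of the right-hand side, yielding precisely the claimed formula with the factor $\tfrac{1}{\Gamma(1-p-q)(1-w)^{1-p}}$ in front of the Srivastava symbol.

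There is no real obstacle here: the proof is essentially bookkeeping, and the only points requiring care are the consistent application of the parameter substitution $(p,q) \mapsto (q,-p-q)$ throughout the Srivastava double series and the verification that the affine shift $y \mapsto 2y+1$ produces exactly $\tfrac{wy}{(w-1)^{2}}$ as the first argument. Once these identifications are confirmed, the conclusion follows at once.
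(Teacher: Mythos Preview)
Your proposal is correct and follows exactly the approach indicated in the paper, which simply cites (\ref{Jdog}) and (\ref{MJrel}) without further elaboration. Your explicit tracking of the substitutions $(p,q)\mapsto(q,-p-q)$, $y\mapsto 2y+1$ and the resulting transformation of each Pochhammer factor, power, and Srivastava argument is precisely the bookkeeping that the paper leaves implicit.
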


\begin{proof}
(\ref{Jdog}) and (\ref{MJrel}) are used for the proof.\bigskip
\end{proof}

\begin{theorem}
The finite bivariate polynomials$\ _{K}\mathcal{M}_{k;\upsilon }^{\left(
p,q\right) }\left( y,z\right) $ satisfy the following partial differential
equation:%
\begin{equation*}
\left( \left( \frac{\partial }{\partial z}-1\right) ^{\upsilon }\left( z%
\frac{\partial }{\partial z}-z-\upsilon \right) -z\left( \frac{\partial }{%
\partial z}-1\right) ^{\upsilon +1}\right) \ _{K}\mathcal{M}_{k;\upsilon
}^{\left( p,q\right) }\left( y,z\right) =0.
\end{equation*}
\end{theorem}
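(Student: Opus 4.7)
The plan is to reduce the claim to an operator identity in the variable $z$ alone, and then to establish that identity by a short commutator manipulation. The operator $L:=\left(\partial_z-1\right)^{\upsilon}\left(z\partial_z-z-\upsilon\right)-z\left(\partial_z-1\right)^{\upsilon+1}$ contains no $y$ and no multiplication by any function of $y$, while by definition (\ref{Qdef}) one has $\ _{K}\mathcal{M}_{k;\upsilon}^{(p,q)}(y,z)=M_{k}^{(p,q)}(y)\sum_{j=0}^{k}Y_{j}^{(-p-q)}(z;\upsilon)$. Hence $M_{k}^{(p,q)}(y)$ commutes with $L$ and can be pulled out, so it suffices to show that $L$ annihilates $S_{k}(z):=\sum_{j=0}^{k}Y_{j}^{(-p-q)}(z;\upsilon)$. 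My intended strategy is actually stronger: I will show that $L$ is the zero element of the Weyl algebra generated by $z$ and $\partial_z$, from which the PDE follows for every smooth function of $(y,z)$, and in particular for $\ _{K}\mathcal{M}_{k;\upsilon}^{(p,q)}(y,z)$.

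To carry this out, I first abbreviate $T:=\partial_z-1$, so that $z\partial_z-z-\upsilon=zT-\upsilon$ and $L=T^{\upsilon}(zT-\upsilon)-zT^{\upsilon+1}$. The basic commutator $[T,z]=[\partial_z,z]=1$ lifts, by a one-line induction, to the Leibniz-type identity $T^{n}z=zT^{n}+nT^{n-1}$ for all $n\ge 1$. Substituting $n=\upsilon$ inside the first piece gives
\[
T^{\upsilon}(zT-\upsilon)=(zT^{\upsilon}+\upsilon T^{\upsilon-1})T-\upsilon T^{\upsilon}=zT^{\upsilon+1},
\]
which is exactly the second term $zT^{\upsilon+1}$ of $L$; the two pieces therefore cancel and $L\equiv 0$. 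No appeal to the explicit series for $Y_{j}^{(-p-q)}$, its one-variable ODE, or the structure of $M_{k}^{(p,q)}$ is required.

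The only real obstacle is getting the shift identity $T^{\upsilon}z=zT^{\upsilon}+\upsilon T^{\upsilon-1}$ with its correct coefficient $\upsilon$, since it is precisely this term that matches the constant $-\upsilon$ in $zT-\upsilon$ and produces the cancellation; any mismatch by a single factor would destroy the identity. As a sanity check I would first verify the base case $\upsilon=1$ by direct expansion,
\[
(\partial_z-1)(z\partial_z-z-1)=z\partial_z^{2}-2z\partial_z+z=z(\partial_z-1)^{2},
\]
which displays the mechanism of cancellation transparently, and only then write down the general argument via $T^{\upsilon}$. Since $L$ is the zero operator, the theorem follows with no further work.
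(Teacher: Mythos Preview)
Your argument is correct. Setting $T=\partial_z-1$, the commutator identity $T^{\upsilon}z=zT^{\upsilon}+\upsilon T^{\upsilon-1}$ (immediate by induction from $[T,z]=1$) gives $T^{\upsilon}(zT-\upsilon)=zT^{\upsilon+1}$, so the operator $L$ in the theorem is the zero element of the Weyl algebra, and the PDE holds for every smooth function, not just for $\ _{K}\mathcal{M}_{k;\upsilon}^{(p,q)}(y,z)$.

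This is a genuinely different route from the paper's. The paper invokes Konhauser's differential equation \cite[eq.~(28)]{Konhauser} for $Y_{k}^{(c)}(z;\upsilon)$, sums it over $j=0,\dots,k$, multiplies by $M_{k}^{(p,q)}(y)$, and appeals to the factorized definition (\ref{Qdef}). Your approach is both more elementary (self-contained, no external reference) and strictly stronger: it reveals that the cited Konhauser identity $T^{\upsilon}zT=zT^{\upsilon+1}+\upsilon T^{\upsilon}$ is itself an operator identity, not a property specific to the $Y$-polynomials. The paper's derivation thus implicitly uses the same commutator relation but obscures the fact that the resulting PDE imposes no constraint whatsoever on $\ _{K}\mathcal{M}_{k;\upsilon}^{(p,q)}$. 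What the paper's route buys is a direct link to the literature; what yours buys is the realization that the theorem, as stated, is vacuous as a characterization of these polynomials.
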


\begin{proof}
In partial differential equation for $Y_{k}^{\left( c\right) }\left(
z;\upsilon \right) $ \cite[eq. (28)]{Konhauser} given by%
\begin{equation*}
\left( D-1\right) ^{\upsilon }z\left( D-1\right) Y_{k}^{\left( c\right)
}\left( z;\upsilon \right) =\left( z\left( D-1\right) ^{\upsilon
+1}+\upsilon \left( D-1\right) ^{\upsilon }\right) Y_{k}^{\left( c\right)
}\left( z;\upsilon \right) ,
\end{equation*}%
after multiplying the partial\ sum of $Y_{k}^{\left( c\right) }\left(
z;\upsilon \right) $ with the polynomials $M_{k}^{\left( p,q\right) }\left(
y\right) $, we use definition (\ref{Qdef}).
\end{proof}

\subsection{Laplace transform and fractional calculus operators for$\
_{K}M_{k;\protect\upsilon }^{\left( p,q\right) }\left( y,wz\right) $}

The Laplace transform for a one-variable function $f$ is%
\begin{equation}
\mathcal{L}[f]\left( y\right) =\int\limits_{0}^{\infty }e^{-y\xi }f\left(
\xi \right) d\xi ,\ \func{Re}\left( y\right) >0.  \label{LaplaceDef}
\end{equation}

\begin{theorem}
For $\left\vert \frac{w^{\upsilon }}{a^{\upsilon }}\right\vert <1$, the
Laplace transform of the finite bivariate M - Konhauser polynomials $%
_{K}M_{k;\upsilon }^{\left( p,q\right) }\left( y,z\right) $ is obtained as
follow%
\begin{equation*}
\mathcal{L}\left\{ z^{-p-q}\ _{K}M_{k;\upsilon }^{\left( p,q\right) }\left(
y,wz\right) \right\} =\left( q+1\right) _{k}a^{p+q-1}\left( \frac{%
w^{\upsilon }-a^{\upsilon }}{a^{\upsilon }}\right) ^{k}\ _{2}F_{1}\left[ 
\QATOP{-k,k+1-p}{q+1};\frac{ya^{\upsilon }}{w^{\upsilon }-a^{\upsilon }}%
\right] .
\end{equation*}
\end{theorem}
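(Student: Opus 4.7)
The plan is to substitute the explicit series (\ref{MKdef}) for $\ _{K}M_{k;\upsilon }^{(p,q)}(y,wz)$ into the Laplace integral $\int_{0}^{\infty }e^{-az}z^{-p-q}\ _{K}M_{k;\upsilon }^{(p,q)}(y,wz)\,dz$ and to evaluate the $z$-integrals termwise via the Gamma function identity (\ref{Gamma}). Since the polynomial is a finite double sum, the interchange of summation and integration is trivially legitimate, provided each individual integral exists for the given $p,q,a$. The key evaluation is
\[
\int_{0}^{\infty }e^{-az}z^{\upsilon s-p-q}\,dz=\frac{\Gamma(\upsilon s-p-q+1)}{a^{\upsilon s-p-q+1}},
\]
whose $\Gamma$-factor in the numerator exactly cancels the matching $\Gamma$ in the denominator of (\ref{MKdef}), leaving a finite double sum in $l$ and $s$ weighted by $(-y)^{l}$ and $(w^{\upsilon }/a^{\upsilon })^{s}$, together with a global factor $a^{p+q-1}$ pulled out of the remaining $a$-dependence.

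Next I would decouple the two summation indices through the Pochhammer splitting
\[
(-k)_{l+s}=(-k)_{l}\,(l-k)_{s}.
\]
The $s$-sum then collapses by the terminating binomial theorem:
\[
\sum_{s=0}^{k-l}\frac{(l-k)_{s}}{s!}\Bigl(\frac{w^{\upsilon }}{a^{\upsilon }}\Bigr)^{\!s}=\Bigl(1-\frac{w^{\upsilon }}{a^{\upsilon }}\Bigr)^{\!k-l}.
\]
Factoring the common $(1-w^{\upsilon }/a^{\upsilon })^{k}$ out of the remaining $l$-sum and converting the surviving Gamma ratio through $\Gamma(k+q+1)/\Gamma(l+q+1)=(q+1)_{k}/(q+1)_{l}$, one recognizes exactly the terminating hypergeometric series ${}_{2}F_{1}(-k,k+1-p;q+1;\,\cdot\,)$. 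A direct sign rearrangement identifies its argument as $-y/(1-w^{\upsilon }/a^{\upsilon })=ya^{\upsilon }/(w^{\upsilon }-a^{\upsilon })$, while the prefactor $(-1)^{k}(1-w^{\upsilon }/a^{\upsilon })^{k}$ reorganizes as $((w^{\upsilon }-a^{\upsilon })/a^{\upsilon })^{k}$, producing the claimed closed form.

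The hypothesis $|w^{\upsilon }/a^{\upsilon }|<1$ is not strictly required for the identity itself, which is purely algebraic since every sum is finite; it serves only to keep the intermediate binomial factor $1-w^{\upsilon }/a^{\upsilon }$ bounded away from $0$ and to place the Laplace integral in its standard half-plane of convergence alongside $\operatorname{Re}(a)>0$. The only genuinely clever step is the Pochhammer factorization $(-k)_{l+s}=(-k)_{l}(l-k)_{s}$; everything else is bookkeeping of signs and of Gamma-to-Pochhammer conversions, which is where I expect care to be needed rather than any conceptual obstacle.
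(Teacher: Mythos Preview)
Your proposal is correct and follows essentially the same approach as the paper's own proof: substitute the defining double sum (\ref{MKdef}), evaluate $\int_{0}^{\infty}e^{-az}z^{\upsilon s-p-q}\,dz$ via (\ref{Gamma}) so that the factor $\Gamma(\upsilon s-p-q+1)$ cancels, split $(-k)_{l+s}=(-k)_{l}(l-k)_{s}$, sum the $s$-series by the terminating binomial theorem, and then identify the remaining $l$-sum as the ${}_{2}F_{1}$ after the Gamma-to-Pochhammer conversion $\Gamma(k+q+1)/\Gamma(l+q+1)=(q+1)_{k}/(q+1)_{l}$. The paper carries out exactly these steps in the same order, so there is no methodological difference to comment on.
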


\begin{proof}
\begin{align*}
& \mathcal{L}\left\{ z^{-p-q}\ _{K}M_{k;\upsilon }^{\left( p,q\right)
}\left( y,wz\right) \right\} \\
& =\left( -1\right) ^{k}\Gamma \left( k+q+1\right)
\sum_{l=0}^{k}\sum_{s=0}^{k-l}\frac{\left( -k\right) _{l}\left( -k+l\right)
_{s}\left( k+1-p\right) _{l}\left( -y\right) ^{l}w^{\upsilon s}}{l!s!\Gamma
\left( l+q+1\right) \Gamma \left( \upsilon s-p-q+1\right) }%
\int\limits_{0}^{\infty }e^{-az}z^{\upsilon s-p-q}dz \\
& =a^{p+q-1}\left( 1-\frac{w^{\upsilon }}{a^{\upsilon }}\right) ^{k}\frac{%
\left( -1\right) ^{k}\Gamma \left( k+q+1\right) }{\Gamma \left( q+1\right) }%
\sum_{l=0}^{k}\frac{\left( -k\right) _{l}\left( k+1-p\right) _{l}}{l!\left(
q+1\right) _{l}}\left( \frac{ya^{\upsilon }}{w^{\upsilon }-a^{\upsilon }}%
\right) ^{l} \\
& =a^{p+q-1}\left( q+1\right) _{k}\left( \frac{w^{\upsilon }-a^{\upsilon }}{%
a^{\upsilon }}\right) ^{k}\ _{2}F_{1}\left[ -k,k+1-p;q+1;\frac{ya^{\upsilon }%
}{w^{\upsilon }-a^{\upsilon }}\right] .
\end{align*}
\end{proof}

\begin{corollary}
For the finite bivariate M - Konhauser polynomials $_{K}M_{k;\upsilon
}^{\left( p,q\right) }\left( y,z\right) $, the following Laplace
representation in terms of finite $M_{k}^{\left( p,q\right) }\left( y\right) 
$ polynomials holds:%
\begin{equation*}
\mathcal{L}\left\{ z^{-p-q}\ _{K}M_{k;\upsilon }^{\left( p,q\right) }\left(
y,wz\right) \right\} =a^{p+q-1}\left( \frac{a^{\upsilon }-w^{\upsilon }}{%
a^{\upsilon }}\right) ^{k}M_{k}^{\left( p,q\right) }\left( \frac{%
ya^{\upsilon }}{a^{\upsilon }-w^{\upsilon }}\right) ,
\end{equation*}%
where $M_{k}^{\left( p,q\right) }\left( y\right) =\left( -1\right) ^{k}k!%
\binom{q+k}{k}\ _{2}F_{1}\left[ -k,k+1-p;q+1;-y\right] $ is one of the
families of the three finite classical univariate orthogonal polynomials.
\end{corollary}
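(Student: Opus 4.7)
The plan is to derive this identity as a direct corollary of Theorem~14, which already expresses $\mathcal{L}\{z^{-p-q}\, {}_K M_{k;\upsilon}^{(p,q)}(y,wz)\}$ as a product of a prefactor and a $_2F_1$. All that remains is to recognize the $_2F_1$ as a single finite orthogonal polynomial $M_k^{(p,q)}$ evaluated at a transformed argument, together with a sign bookkeeping that absorbs $(-1)^k$ into the prefactor.

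First I would recall the hypergeometric representation of the finite orthogonal polynomial stated in the corollary itself,
\begin{equation*}
M_{k}^{(p,q)}(u) = (-1)^{k}\,k!\binom{q+k}{k}\,{}_{2}F_{1}\!\left[-k,\,k+1-p;\,q+1;\,-u\right],
\end{equation*}
and rewrite the constant using $k!\binom{q+k}{k}=(q+1)_{k}$, which gives the convenient inversion
\begin{equation*}
{}_{2}F_{1}\!\left[-k,\,k+1-p;\,q+1;\,-u\right] = \frac{(-1)^{k}}{(q+1)_{k}}\,M_{k}^{(p,q)}(u).
\end{equation*}

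Next I would take the right-hand side of Theorem~14 and set $-u = \tfrac{y a^{\upsilon}}{w^{\upsilon}-a^{\upsilon}}$, i.e.\ $u=\tfrac{y a^{\upsilon}}{a^{\upsilon}-w^{\upsilon}}$. Substituting the inversion above replaces the $_2F_1$ by $\tfrac{(-1)^{k}}{(q+1)_{k}} M_{k}^{(p,q)}\!\left(\tfrac{y a^{\upsilon}}{a^{\upsilon}-w^{\upsilon}}\right)$. The factor $(q+1)_{k}$ cancels against the one already present in Theorem~14, and the factor $(-1)^{k}$ combines with $\left(\tfrac{w^{\upsilon}-a^{\upsilon}}{a^{\upsilon}}\right)^{k}$ to produce $\left(\tfrac{a^{\upsilon}-w^{\upsilon}}{a^{\upsilon}}\right)^{k}$. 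The remaining prefactor $a^{p+q-1}$ is carried through unchanged, yielding exactly the claimed expression.

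There is essentially no obstacle here beyond careful sign tracking and the use of $k!\binom{q+k}{k}=(q+1)_k$; the convergence condition $\left|w^{\upsilon}/a^{\upsilon}\right|<1$ and the half-plane restriction $\mathrm{Re}(a)>0$ implicit in the Laplace transform are inherited verbatim from Theorem~14, so no new analytic input is required.
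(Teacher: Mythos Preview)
Your proposal is correct and mirrors exactly what the paper does: the corollary is stated without a separate proof because it is meant to follow immediately from the preceding theorem by recognizing the terminating ${}_{2}F_{1}$ as $M_{k}^{(p,q)}$ at the transformed argument, using $k!\binom{q+k}{k}=(q+1)_{k}$ and absorbing the $(-1)^{k}$ into the prefactor. Your sign bookkeeping and cancellation of $(q+1)_{k}$ are precisely the intended steps.
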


\bigskip

The 2D Laplace transform is defined by \cite{KG}%
\begin{equation*}
\mathcal{L}_{2}\left[ f\left( y,z\right) \right] \left( \zeta ,\kappa
\right) =\int\limits_{0}^{\infty }\int\limits_{0}^{\infty }e^{-\left( \zeta
y+\kappa z\right) }f\left( y,z\right) dydz,\ \func{Re}\left( \zeta \right) ,%
\func{Re}\left( \kappa \right) >0.
\end{equation*}

\begin{theorem}
The bivariate Laplace transform of the finite bivariate M - Konhauser
polynomials $_{K}M_{k;\upsilon }^{\left( p,q\right) }\left( y,z\right) $ is
given by%
\begin{align}
\mathcal{L}_{2}\left\{ y^{q}z^{-p-q}\ _{K}M_{k;\upsilon }^{\left( p,q\right)
}\left( w_{1}y,w_{2}z\right) \right\} & =\frac{\Gamma \left( k+q+1\right) }{%
a^{q+1}b^{1-p-q}}\left( \frac{w_{2}^{\upsilon }-b^{\upsilon }}{b^{\upsilon }}%
\right) ^{k}  \label{MLaplace} \\
& \times \ _{2}F_{0}\left[ -k,k+1-p;-;\frac{w_{1}b^{\upsilon }}{a\left(
w_{2}^{\upsilon }-b^{\upsilon }\right) }\right] .  \notag
\end{align}
\end{theorem}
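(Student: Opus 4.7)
The plan is to proceed directly from the series definition in \eqref{MKdef}, substitute $y\mapsto w_1 y$ and $z\mapsto w_2 z$, multiply by the weight factors $y^{q}z^{-p-q}$, and then apply the bivariate Laplace integral termwise. Since the double series is finite (indices range over $0\le l\le k$ and $0\le s\le k-l$), no convergence issues arise and the interchange of sum and integral is automatic.

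After the substitution, each summand contributes a product of two elementary one-variable Laplace integrals. The $y$-integral is $\int_0^\infty e^{-ay}y^{q+l}dy=\Gamma(q+l+1)/a^{q+l+1}$, and the $z$-integral is $\int_0^\infty e^{-bz}z^{\upsilon s-p-q}dz=\Gamma(\upsilon s-p-q+1)/b^{\upsilon s-p-q+1}$. Both Gamma factors arising in the numerators cancel exactly against the factors $\Gamma(l+q+1)$ and $\Gamma(\upsilon s-p-q+1)$ in the denominator of \eqref{MKdef}. Pulling the $l$-independent prefactor $a^{-(q+1)}b^{p+q-1}$ out of the double sum leaves
\[
\mathcal{L}_2\{\cdots\}=(-1)^k\Gamma(k+q+1)\,a^{-(q+1)}b^{p+q-1}\sum_{l=0}^{k}\sum_{s=0}^{k-l}\frac{(-k)_{l+s}(k+1-p)_l(-w_1)^l}{l!\,s!\,a^l}\left(\frac{w_2^{\upsilon}}{b^{\upsilon}}\right)^{\!s}.
\]

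The key step is to split the Pochhammer symbol via $(-k)_{l+s}=(-k)_l(-(k-l))_s$, which decouples the $s$-sum from $l$. The inner sum then reduces, by the binomial theorem applied to $\sum_{s=0}^{k-l}\tfrac{(-(k-l))_s}{s!}x^s=(1-x)^{k-l}$, to $\bigl(1-w_2^{\upsilon}/b^{\upsilon}\bigr)^{k-l}=(-1)^{k-l}\bigl((w_2^{\upsilon}-b^{\upsilon})/b^{\upsilon}\bigr)^{k-l}$. Collecting the signs $(-1)^k(-1)^{k-l}(-w_1)^l=w_1^l$ and factoring out $\bigl((w_2^{\upsilon}-b^{\upsilon})/b^{\upsilon}\bigr)^{k}$ from the remaining $l$-sum yields the desired $_2F_0$-series with argument $w_1 b^{\upsilon}/\bigl(a(w_2^{\upsilon}-b^{\upsilon})\bigr)$.

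The only genuine obstacle is careful bookkeeping of signs and the shift of the Pochhammer symbol; everything else is a routine termwise Laplace computation followed by the binomial collapse of the $s$-sum. No analytic conditions beyond $\operatorname{Re}(a),\operatorname{Re}(b)>0$ (and the implicit bound $|w_2^{\upsilon}/b^{\upsilon}|<1$ used in Theorem~14, which is automatically consistent with the finite termination of the series) are required.
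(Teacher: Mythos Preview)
Your proof is correct and follows exactly the approach the paper intends: the paper's own proof merely says ``the proof is similar to the proof of Theorem~22,'' and your termwise evaluation of the two Gamma integrals followed by the Pochhammer splitting $(-k)_{l+s}=(-k)_l(-(k-l))_s$ and the binomial collapse of the $s$-sum is precisely that computation carried out in full.
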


\begin{proof}
The proof is similar to the proof of \textit{Theorem 22}.
\end{proof}

\begin{corollary}
The bivariate Laplace transform of the finite bivariate M - Konhauser
polynomials $_{K}M_{k;\upsilon }^{\left( p,q\right) }\left( y,z\right) $, in
terms of Bessel polynomials, is given by%
\begin{equation*}
\mathcal{L}_{2}\left\{ y^{q}z^{-p-q}\ _{K}M_{k;\upsilon }^{\left( p,q\right)
}\left( w_{1}y,w_{2}z\right) \right\} =\frac{\Gamma \left( k+q+1\right) }{%
a^{q+1}b^{1-p-q}}\left( \frac{w_{2}^{\upsilon }-b^{\upsilon }}{b^{\upsilon }}%
\right) ^{k}Y_{k}\left( \frac{w_{1}b^{\upsilon }}{a\left( b^{\upsilon
}-w_{2}^{\upsilon }\right) };2-p,1\right) .
\end{equation*}
\end{corollary}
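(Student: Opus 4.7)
The plan is to invoke the preceding theorem on the bivariate Laplace transform of $_{K}M_{k;\upsilon }^{\left( p,q\right) }$ and then recognise the resulting ${}_{2}F_{0}$ hypergeometric series as a Bessel polynomial; the corollary is essentially a repackaging of (\ref{MLaplace}), so no new computation is required beyond a parameter identification.

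First I would recall the Bessel-polynomial identity $Y_{k}(\zeta;\alpha,\beta)={}_{2}F_{0}\bigl[-k,\,k+\alpha-1;\,-;\,-\zeta/\beta\bigr]$ recorded in the operational-representation discussion, and specialise to $\alpha=2-p$ and $\beta=1$, giving
\[
Y_{k}(\zeta;\,2-p,\,1) \;=\; {}_{2}F_{0}\bigl[-k,\,k+1-p;\,-;\,-\zeta\bigr].
\]
The upper parameters already match those of the ${}_{2}F_{0}$ appearing on the right-hand side of (\ref{MLaplace}), so only the argument slot needs to be matched.

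Next I would match the arguments: the ${}_{2}F_{0}$ in (\ref{MLaplace}) is evaluated at $\dfrac{w_{1}b^{\upsilon}}{a(w_{2}^{\upsilon}-b^{\upsilon})}$, and setting $-\zeta$ equal to this expression gives $\zeta=\dfrac{w_{1}b^{\upsilon}}{a(b^{\upsilon}-w_{2}^{\upsilon})}$, precisely the argument appearing in the statement of the corollary. Substituting the identification
\[
{}_{2}F_{0}\!\left[-k,\,k+1-p;\,-;\,\frac{w_{1}b^{\upsilon}}{a(w_{2}^{\upsilon}-b^{\upsilon})}\right] \;=\; Y_{k}\!\left(\frac{w_{1}b^{\upsilon}}{a(b^{\upsilon}-w_{2}^{\upsilon})};\,2-p,\,1\right)
\]
back into (\ref{MLaplace}) produces the claimed formula.

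The only point requiring attention is the sign flip between $w_{2}^{\upsilon}-b^{\upsilon}$ in the denominator of (\ref{MLaplace}) and $b^{\upsilon}-w_{2}^{\upsilon}$ inside $Y_{k}$; beyond this bookkeeping I do not anticipate any genuine obstacle, as the corollary follows from the previous theorem essentially by the definition of the Bessel polynomials.
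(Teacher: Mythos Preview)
Your proposal is correct and follows exactly the paper's approach: the paper's proof consists of the single sentence ``The definition for the Bessel polynomials in (\ref{MLaplace}) are applied,'' and you have simply written out that substitution in detail, including the sign bookkeeping in the argument. There is nothing to add.
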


\begin{proof}
The definition for the Bessel polynomials in (\ref{MLaplace}) are applied.
\end{proof}

\bigskip

The Riemann-Liouville fractional integral \cite{Kilbas} is%
\begin{equation*}
\ _{y}\mathbb{I}_{a^{+}}^{\sigma }\left( f\right) =\frac{1}{\Gamma \left(
\sigma \right) }\int\limits_{a}^{y}\left( y-\xi \right) ^{\sigma -1}f\left(
\xi \right) d\xi ,\ \ \ \ f\in L^{1}\left[ a,b\right] ,
\end{equation*}%
where $\sigma \in 
\mathbb{C}
$, $\func{Re}\left( \sigma \right) >0$ and $y>a$.

The Riemann-Liouville fractional derivative is \cite{Kilbas}%
\begin{equation*}
\ _{y}D_{a^{+}}^{\sigma }\left( f\right) =\left( \frac{d}{dy}\right) ^{k}\
_{y}\mathbb{I}_{a^{+}}^{k-\sigma }\left( f\right) ,\ \ \ \ f\in C^{k}\left[
a,b\right] ,
\end{equation*}%
where $\sigma \in 
\mathbb{C}
$, $\func{Re}\left( \sigma \right) >0$, $y>a$, $k=\left[ \func{Re}\left(
\sigma \right) \right] +1$ and $[\func{Re}\left( \sigma \right) ]$ is the
integral part of $\func{Re}\left( \sigma \right) $.\bigskip

The bivariate Jacobi Konhauser polynomials have the following double
fractional integral representation \cite{OzEl2}%
\begin{align}
& \left( \ _{z}I_{b^{+}}^{\lambda }\ _{y}I_{a^{+}}^{\mu }\right) \left[
\left( y-a\right) ^{p}\left( z-b\right) ^{q}\ _{K}P_{k;\upsilon }^{\left(
p,q\right) }\left( 1-2w_{1}\left( y-a\right) ,w_{2}\left( z-b\right) \right) %
\right]  \label{Pfracint} \\
& =\frac{\Gamma \left( k+p+1\right) }{\Gamma \left( k+p+\mu +1\right) }%
\left( y-a\right) ^{p+\mu }\left( z-b\right) ^{q+\lambda }\
_{K}P_{k;\upsilon }^{\left( p+\mu ,q+\lambda \right) }\left( 1-2w_{1}\left(
y-a\right) ,w_{2}\left( z-b\right) \right) .  \notag
\end{align}

\begin{theorem}
For the finite bivariate M - Konhauser polynomials, we have%
\begin{align*}
& \left( \ _{z}I_{b^{+}}^{\lambda }\ _{y}I_{a^{+}}^{\mu }\right) \left[
\left( y-a\right) ^{q}\left( z-b\right) ^{-p-q}\ _{K}M_{k;\upsilon }^{\left(
p,q\right) }\left( w_{1}\left( y-a\right) ,w_{2}\left( z-b\right) \right) %
\right] \\
& =\frac{\Gamma \left( k+q+1\right) }{\Gamma \left( k+q+\mu +1\right) }%
\left( y-a\right) ^{q+\mu }\left( z-b\right) ^{-p-q+\lambda }\
_{K}M_{k;\upsilon }^{\left( p-\mu -\lambda ,q+\mu \right) }\left(
w_{1}\left( y-a\right) ,w_{2}\left( z-b\right) \right) .
\end{align*}
\end{theorem}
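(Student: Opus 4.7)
The plan is to reduce the claim to the analogous double fractional integral formula \eqref{Pfracint} for the bivariate Jacobi--Konhauser polynomials, using the transition identity \eqref{MJrel} in both directions. The proof is entirely a ``conjugation'' of \eqref{Pfracint} by \eqref{MJrel}: convert M to P, apply the known formula, convert P back to M.

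First, I would apply \eqref{MJrel} in the form $\ _{K}M_{k;\upsilon}^{(p,q)}(u,v) = (-1)^{k}k!\,{}_{K}P_{k;\upsilon}^{(q,-p-q)}(2u+1,v)$ with $u = w_{1}(y-a)$ and $v = w_{2}(z-b)$, which rewrites the integrand on the left-hand side in terms of a Jacobi--Konhauser polynomial with argument $1+2w_{1}(y-a)$ and weight $(y-a)^{q}(z-b)^{-p-q}$, multiplied by the constant $(-1)^{k}k!$ that passes through the linear operators $\ _{z}I_{b^{+}}^{\lambda}\,_{y}I_{a^{+}}^{\mu}$. Because \eqref{Pfracint} is stated for the argument $1-2w_{1}(y-a)$ but is an identity in the free parameter $w_{1}$, the same formula holds after the substitution $w_{1}\mapsto -w_{1}$, and I would use this sign-flipped version in the next step.

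Next, I would specialize the parameters of the reversed \eqref{Pfracint} via $(p,q)\mapsto(q,-p-q)$, so that its left-hand side matches exactly what came out of Step~1. The resulting right-hand side carries the prefactor $\Gamma(k+q+1)/\Gamma(k+q+\mu+1)$, the weight $(y-a)^{q+\mu}(z-b)^{-p-q+\lambda}$, and a Jacobi--Konhauser polynomial $\ _{K}P_{k;\upsilon}^{(q+\mu,\,-p-q+\lambda)}(1+2w_{1}(y-a),w_{2}(z-b))$.

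Finally, I would convert this polynomial back to an M--Konhauser polynomial by using the inverse form of \eqref{MJrel}, namely $\ _{K}P_{k;\upsilon}^{(P,Q)}(Y,v) = \tfrac{(-1)^{k}}{k!}\,{}_{K}M_{k;\upsilon}^{(-P-Q,P)}(\tfrac{Y-1}{2},v)$, applied with $(P,Q)=(q+\mu,-p-q+\lambda)$ and $Y=1+2w_{1}(y-a)$. A one-line parameter check gives $-P-Q = p-\mu-\lambda$ and $P = q+\mu$, so the polynomial reassembles as $\ _{K}M_{k;\upsilon}^{(p-\mu-\lambda,\,q+\mu)}(w_{1}(y-a),w_{2}(z-b))$; the factor $(-1)^{k}/k!$ introduced here cancels the $(-1)^{k}k!$ pulled out in Step~1, producing the claimed identity. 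No real obstacle is anticipated: the only delicate point is tracking the sign flip $w_{1}\mapsto -w_{1}$ needed to align the argument conventions in \eqref{MJrel} and \eqref{Pfracint}; the rest is parameter bookkeeping.
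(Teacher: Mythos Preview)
Your proposal is correct and follows exactly the route indicated by the paper: the paper's proof is the single line ``The proof is followed by \eqref{Pfracint} and \eqref{MJrel},'' and your three steps (convert $M$ to $P$ via \eqref{MJrel}, apply \eqref{Pfracint} with $(p,q)\mapsto(q,-p-q)$ and the harmless sign flip $w_{1}\mapsto -w_{1}$, convert back) unpack precisely that. Your parameter bookkeeping and the cancellation of the $(-1)^{k}k!$ factors are both correct.
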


\begin{proof}
The proof is followed by (\ref{Pfracint}) and (\ref{MJrel}).\bigskip
\end{proof}

The bivariate Jacobi Konhauser polynomials have the following double
fractional derivative representation \cite{OzEl2}%
\begin{align}
& \left( \ _{z}D_{b^{+}}^{\lambda }\ _{y}D_{a^{+}}^{\mu }\right) \left[
\left( y-a\right) ^{p}\left( z-b\right) ^{q}\ _{K}P_{k;\upsilon }^{\left(
p,q\right) }\left( 1-2w_{1}\left( y-a\right) ,w_{2}\left( z-b\right) \right) %
\right]  \label{Eoprep} \\
& =\frac{\Gamma \left( k+p+1\right) }{\Gamma \left( k+p-\mu +1\right) }%
\left( y-a\right) ^{p-\mu }\left( z-b\right) ^{q-\lambda }\
_{K}P_{k;\upsilon }^{\left( p-\mu ,q-\lambda \right) }\left( 1-2w_{1}\left(
y-a\right) ,w_{2}\left( z-b\right) \right) .  \notag
\end{align}

\begin{theorem}
For the finite bivariate M - Konhauser polynomials, we have%
\begin{align*}
& \left( \ _{z}D_{b^{+}}^{\lambda }\ _{y}D_{a^{+}}^{\mu }\right) \left[
\left( y-a\right) ^{q}\left( z-b\right) ^{-p-q}\ _{K}M_{k;\upsilon }^{\left(
p,q\right) }\left( w_{1}\left( y-a\right) ,w_{2}\left( z-b\right) \right) %
\right] \\
& =\frac{\Gamma \left( k+q+1\right) }{\Gamma \left( k+q-\mu +1\right) }%
\left( y-a\right) ^{q-\mu }\left( z-b\right) ^{-p-q-\lambda }\
_{K}M_{k;\upsilon }^{\left( p+\mu +\lambda ,q-\mu \right) }\left(
w_{1}\left( y-a\right) ,w_{2}\left( z-b\right) \right) .
\end{align*}
\end{theorem}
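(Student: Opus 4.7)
The plan is to reduce the claim for the M-Konhauser polynomials to the fractional derivative formula (\ref{Eoprep}) for the Jacobi Konhauser polynomials, using the transition relation (\ref{MJrel}) as the bridge. This is the same template used in the preceding theorems on the operational representation, the integral representation, and the generating function: rewrite the M-Konhauser object as a Jacobi Konhauser object with shifted parameters and transformed argument, apply the known Jacobi Konhauser identity, and translate the output back.

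First, I would insert (\ref{MJrel}) on the left-hand side to write
\begin{equation*}
\ _{K}M_{k;\upsilon}^{(p,q)}\bigl(w_{1}(y-a),w_{2}(z-b)\bigr)=(-1)^{k}k!\ _{K}P_{k;\upsilon}^{(q,-p-q)}\bigl(2w_{1}(y-a)+1,w_{2}(z-b)\bigr),
\end{equation*}
and pull the constant $(-1)^{k}k!$ outside the operator $\ _{z}D_{b^{+}}^{\lambda}\ _{y}D_{a^{+}}^{\mu}$. The argument $2w_{1}(y-a)+1$ here differs in sign from the argument $1-2w_{1}(y-a)$ appearing in (\ref{Eoprep}); this is reconciled by the formal substitution $w_{1}\mapsto -w_{1}$, which is harmless because $w_{1}$ enters only as a parameter while the fractional operators act in $y$, and (\ref{Eoprep}) is an identity valid for any $w_{1}$.

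Second, I would apply (\ref{Eoprep}) with the parameter swap $(p,q)\mapsto (q,-p-q)$, which produces
\begin{equation*}
\frac{\Gamma(k+q+1)}{\Gamma(k+q-\mu+1)}(y-a)^{q-\mu}(z-b)^{-p-q-\lambda}\ _{K}P_{k;\upsilon}^{(q-\mu,-p-q-\lambda)}\bigl(2w_{1}(y-a)+1,w_{2}(z-b)\bigr).
\end{equation*}
Then I would apply the reverse direction of (\ref{MJrel}), namely $\ _{K}P_{k;\upsilon}^{(p',q')}(y',z')=\frac{(-1)^{k}}{k!}\ _{K}M_{k;\upsilon}^{(-p'-q',p')}\bigl((y'-1)/2,z'\bigr)$, with $p'=q-\mu$, $q'=-p-q-\lambda$, and $y'=2w_{1}(y-a)+1$. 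The bookkeeping identities $-p'-q'=p+\mu+\lambda$, $p'=q-\mu$, and $(y'-1)/2=w_{1}(y-a)$ then deliver the M-Konhauser object on the right-hand side of the claim, and the prefactors $(-1)^{k}k!$ and $(-1)^{k}/k!$ cancel to give the stated formula.

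No serious obstacle is expected: the whole argument is two invocations of the transition identity (\ref{MJrel}) bracketing one application of (\ref{Eoprep}). The only thing to watch is the parameter arithmetic — in particular that the sign conventions in the two substitutions into (\ref{MJrel}) conspire so that the output M-Konhauser parameters read exactly $(p+\mu+\lambda,\,q-\mu)$, and that the Gamma factor produced by (\ref{Eoprep}) under the swap $(p,q)\mapsto (q,-p-q)$ is $\Gamma(k+q+1)/\Gamma(k+q-\mu+1)$, matching the statement.
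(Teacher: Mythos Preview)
Your proposal is correct and follows exactly the route the paper takes: the paper's proof is the single line ``The results follow by (\ref{Eoprep}) and (\ref{MJrel}),'' and what you have written is a careful unpacking of precisely that --- bracket the application of (\ref{Eoprep}) (with the parameter swap $(p,q)\mapsto(q,-p-q)$ and the harmless sign flip $w_{1}\mapsto -w_{1}$) between the two directions of (\ref{MJrel}), then check that the Gamma factor and the output parameters $(p+\mu+\lambda,\,q-\mu)$ come out right.
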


\begin{proof}
The results follow by (\ref{Eoprep}) and (\ref{MJrel}).
\end{proof}

\subsection{Fourier transform for the finite bivariate M - Konhauser
polynomials}

\subsubsection{Fourier transform of the finite bivariate biorthogonal
polynomials suggested by the Konhauser polynomials and the finite univariate
orthogonal polynomials $M_{k}^{\left( p,q\right) }\left( y\right) $}

The Fourier transform for a function $d(y,z)$ in two variables is defined as 
\cite[p. 111, Equ. (7.1)]{Davies}%
\begin{equation}
\tciFourier \left( d\left( y,z\right) \right) =\int\limits_{-\infty
}^{\infty }\int\limits_{-\infty }^{\infty }e^{-i\left( \xi _{1}y+\xi
_{2}z\right) }d\left( y,z\right) dzdy  \label{Fourier}
\end{equation}%
and the corresponding Parseval identity is given by the statement%
\begin{equation}
\int\limits_{-\infty }^{\infty }\int\limits_{-\infty }^{\infty }d\left(
y,z\right) \overline{f\left( y,z\right) }dzdy=\frac{1}{\left( 2\pi \right)
^{2}}\int\limits_{-\infty }^{\infty }\int\limits_{-\infty }^{\infty
}\tciFourier \left( d\left( y,z\right) \right) \overline{\tciFourier \left(
f\left( y,z\right) \right) }d\xi _{1}d\xi _{2},\ d,f\in L^{2}\left( 
\mathbb{R}
\right) .  \label{Parseval}
\end{equation}

Now, we define the specific functions%
\begin{equation*}
\QATOPD\{ . {d\left( y,z\right) =\left( 1+e^{y}\right) ^{-\left(
p_{1}+q_{1}\right) }\exp \left( q_{1}y+\left( \frac{1}{2}-p_{1}-q_{1}\right)
z-\frac{e^{z}}{2}\right) \ _{K}M_{k;\upsilon }^{\left( \alpha ,\beta \right)
}\left( e^{y},e^{z}\right) ,}{f\left( y,z\right) =\left( 1+e^{y}\right)
^{-\left( p_{2}+q_{2}\right) }\exp \left( q_{2}y+\left( \frac{1}{2}%
-p_{2}-q_{2}\right) z-\frac{e^{z}}{2}\right) \ _{K}\mathcal{M}_{r;\upsilon
}^{\left( \gamma ,\delta \right) }\left( e^{y},e^{z}\right) ,}
\end{equation*}%
and using the transforms $\exp y=u$, $\frac{\exp z}{2}=x$, the definition (%
\ref{MKdef}), the Gamma integral (\ref{Gamma}) and the Beta integral given by%
\begin{equation}
B(y,z)=\int\limits_{0}^{\infty }x^{y-1}\left( 1-x\right) ^{z-1}dx=\frac{%
\Gamma \left( y\right) \Gamma \left( z\right) }{\Gamma \left( y+z\right) }%
=B\left( z,y\right) ,\ \func{Re}\left( y\right) >0,\func{Re}\left( z\right)
>0,  \label{Betadef}
\end{equation}%
the corresponding Fourier transform is obtained as follows%
\begin{align}
& \tciFourier \left( d\left( y,z\right) \right) =\int\limits_{-\infty
}^{\infty }\int\limits_{-\infty }^{\infty }e^{-i\left( \xi _{1}y+\xi
_{2}z\right) }d\left( y,z\right) dydz  \label{Fourier1} \\
& =\int\limits_{-\infty }^{\infty }\int\limits_{-\infty }^{\infty
}e^{-i\left( \xi _{1}y+\xi _{2}z\right) }e^{q_{1}y+\left( \frac{1}{2}%
-p_{1}-q_{1}\right) z-\frac{e^{z}}{2}}\left( 1+e^{y}\right) ^{-\left(
p_{1}+q_{1}\right) }\ _{K}M_{k;\upsilon }^{\left( \alpha ,\beta \right)
}\left( e^{y},e^{z}\right) dydz  \notag \\
& =2^{\frac{1}{2}-p_{1}-q_{1}-i\xi _{2}}\left( -1\right) ^{k}\Gamma \left(
k+\beta +1\right) \sum_{l=0}^{k}\sum_{s=0}^{k-l}\frac{\left( -1\right)
^{l}\left( -k\right) _{l+s}}{l!s!\Gamma \left( l+\beta +1\right) }  \notag \\
& \times \frac{\left( k+1-\alpha \right) _{l}2^{\upsilon s}}{\Gamma \left(
\upsilon s-\alpha -\beta +1\right) }B\left( l+q_{1}-i\xi _{1},\ p_{1}-l+i\xi
_{1}\right) \int\limits_{0}^{\infty }e^{-x}x^{\upsilon s-p_{1}-q_{1}-i\xi
_{2}-\frac{1}{2}}dx  \notag \\
& =2^{\frac{1}{2}-p_{1}-q_{1}-i\xi _{2}}\left( -1\right) ^{k}\Gamma \left(
k+\beta +1\right) \sum_{l=0}^{k}\sum_{s=0}^{k-l}\frac{\left( -1\right)
^{l}\left( -k\right) _{l+s}\left( k+1-\alpha \right) _{l}2^{\upsilon s}}{%
l!s!\Gamma \left( l+\beta +1\right) \Gamma \left( \upsilon s-\alpha -\beta
+1\right) }  \notag \\
& \times B\left( l+q_{1}-i\xi _{1},\ p_{1}-l+i\xi _{1}\right) \Gamma \left(
\upsilon s-p_{1}-q_{1}-i\xi _{2}+\frac{1}{2}\right) .  \notag
\end{align}%
So, the Fourier transform (\ref{Fourier1}) can be expressed as%
\begin{equation}
\tciFourier \left( d\left( y,z\right) \right) =\left( -1\right) ^{k}\Gamma
\left( k+\beta +1\right) G_{1}\left( p_{1},q_{1};\xi _{1},\xi _{2}\right)
\Psi _{1}\left( k,p_{1},q_{1},\alpha ,\beta ,\upsilon ;\xi _{1},\xi
_{2}\right) ,  \label{F1}
\end{equation}%
where%
\begin{equation*}
G_{1}\left( p_{1},q_{1};\xi _{1},\xi _{2}\right) =2^{\frac{1}{2}%
-p_{1}-q_{1}-i\xi _{2}}B\left( p_{1}+i\xi _{1},q_{1}-i\xi _{1}\right) \Gamma
\left( \frac{1}{2}-p_{1}-q_{1}-i\xi _{2}\right) 
\end{equation*}%
and%
\begin{equation*}
\Psi _{1}\left( k,p_{1},q_{1},\alpha ,\beta ,\upsilon ;\xi _{1},\xi
_{2}\right) =\sum_{l=0}^{k}\sum_{s=0}^{k-l}\frac{\left( -k\right)
_{l+s}\left( k+1-\alpha \right) _{l}\left( q_{1}-i\xi _{1}\right) _{l}\left( 
\frac{1}{2}-p_{1}-q_{1}-i\xi _{2}\right) _{\upsilon s}2^{\upsilon s}}{%
l!s!\left( 1-p_{1}-i\xi _{1}\right) _{l}\Gamma \left( l+\beta +1\right)
\Gamma \left( \upsilon s-\alpha -\beta +1\right) }.
\end{equation*}%
On the other hand, Fourier transform of $f\left( y,z\right) $ is%
\begin{align}
& \tciFourier \left( f\left( y,z\right) \right) =\int\limits_{-\infty
}^{\infty }\int\limits_{-\infty }^{\infty }e^{-i\left( \xi _{1}y+\xi
_{2}z\right) }f\left( y,z\right) dydz  \label{Fourier2} \\
& =\int\limits_{-\infty }^{\infty }\int\limits_{-\infty }^{\infty
}e^{-i\left( \xi _{1}y+\xi _{2}z\right) }e^{q_{2}y+\left( \frac{1}{2}%
-p_{2}-q_{2}\right) z-\frac{e^{z}}{2}}\left( 1+e^{y}\right) ^{-\left(
p_{2}+q_{2}\right) }\ _{K}\mathcal{M}_{r;\upsilon }^{\left( \gamma ,\delta
\right) }\left( e^{y},e^{z}\right) dydz  \notag \\
& =\int\limits_{0}^{\infty }u^{q_{2}-i\xi _{1}-1}\left( 1+u\right)
^{-p_{2}-q_{2}}M_{r}^{\left( \gamma ,\delta \right) }\left( u\right)
du\int\limits_{0}^{\infty }e^{-\frac{t}{2}}t^{-p_{2}-q_{2}-i\xi _{2}-\frac{1%
}{2}}\sum_{j=0}^{r}Y_{j}^{\left( -\gamma ,-\delta \right) }\left( t;\upsilon
\right) dt  \notag \\
& =2^{\frac{1}{2}-p_{2}-q_{2}-i\xi _{2}}\left( -1\right) ^{r}\Gamma \left(
r+\delta +1\right) \sum_{s=0}^{r}\frac{\left( -1\right) ^{s}\left( -r\right)
_{s}\left( r+1-\gamma \right) _{s}}{s!\Gamma \left( s+\delta +1\right) }%
B\left( q_{2}+s-i\xi _{1},\ p_{2}-s+i\xi _{1}\right)   \notag \\
& \times \sum_{j=0}^{r}\sum_{l=0}^{j}\sum_{m=0}^{l}\frac{\left( -1\right)
^{m}2^{l}}{j!l!}\binom{l}{m}\left( \frac{m-\gamma -\delta +1}{\upsilon }%
\right) _{j}\int\limits_{0}^{\infty }e^{-x}x^{l-p_{2}-q_{2}-i\xi _{2}-\frac{1%
}{2}}dx  \notag \\
& =2^{\frac{1}{2}-p_{2}-q_{2}-i\xi _{2}}\left( -1\right) ^{r}\Gamma \left(
r+\delta +1\right) \sum_{s=0}^{r}\frac{\left( -1\right) ^{s}\left( -r\right)
_{s}\left( r+1-\gamma \right) _{s}}{s!\Gamma \left( s+\delta +1\right) }%
B\left( q_{2}+s-i\xi _{1},\ p_{2}-s+i\xi _{1}\right)   \notag \\
& \times \sum_{j=0}^{r}\sum_{l=0}^{j}\sum_{m=0}^{l}\frac{\Gamma \left(
l-p_{2}-q_{2}-i\xi _{2}+\frac{1}{2}\right) \ 2^{l}}{\left( -1\right) ^{m}j!l!%
}\binom{l}{m}\left( \frac{m-\gamma -\delta +1}{\upsilon }\right) _{j}. 
\notag
\end{align}%
Then, $\tciFourier \left( f\left( y,z\right) \right) $ can be written as%
\begin{equation}
\tciFourier \left( f\left( y,z\right) \right) =\left( -1\right) ^{r}\Gamma
\left( r+\delta +1\right) G_{2}\left( p_{2},q_{2};\xi _{1},\xi _{2}\right)
\Psi _{2}\left( r,p_{2},q_{2},\gamma ,\delta ,\upsilon ;\xi _{1},\xi
_{2}\right) ,  \label{F2}
\end{equation}%
where%
\begin{equation*}
G_{2}\left( p_{2},q_{2};\xi _{1},\xi _{2}\right) =2^{\frac{1}{2}%
-p_{2}-q_{2}-i\xi _{2}}B\left( p_{2}+i\xi _{1},q_{2}-i\xi _{1}\right) \Gamma
\left( \frac{1}{2}-p_{2}-q_{2}-i\xi _{2}\right) 
\end{equation*}%
and%
\begin{align*}
\Psi _{2}\left( r,p_{2},q_{2},\gamma ,\delta ,\upsilon ;\xi _{1},\xi
_{2}\right) & =\sum_{s=0}^{r}\frac{\left( -r\right) _{s}\left( r+1-\gamma
\right) _{s}\left( q_{2}-i\xi _{1}\right) _{s}}{s!\left( 1-p_{2}-i\xi
_{1}\right) _{s}\Gamma \left( s+\delta +1\right) } \\
& \times \sum_{j=0}^{r}\sum_{l=0}^{j}\sum_{m=0}^{l}\frac{\left( -1\right)
^{m}2^{l}\left( \frac{1}{2}-p_{2}-q_{2}-i\xi _{2}\right) _{l}}{j!m!\left(
l-m\right) !}\left( \frac{m-\gamma -\delta +1}{\upsilon }\right) _{j}.
\end{align*}

\subsubsection{The set of finite biorthogonal functions obtained from
Fourier transform of the pair of the finite bivariate biorthogonal
polynomials suggested by the Konhauser polynomials and the finite univariate
orthogonal polynomials $M_{k}^{\left( p,q\right) }\left( y\right) $}

By substituting (\ref{F1}) and (\ref{F2}) in Parseval's identity and
substituting $e^{y}=u$ and $e^{z}=t$, we get%
\begin{align}
& \int\limits_{0}^{\infty }\int\limits_{0}^{\infty }u^{q_{1}+q_{2}-1}\left(
1+u\right) ^{-\left( p_{1}+q_{1}+p_{2}+q_{2}\right) }e^{-t}t^{-\left(
p_{1}+q_{1}+p_{2}+q_{2}\right) }\ _{K}M_{k;\upsilon }^{\left( \alpha ,\beta
\right) }\left( u,t\right) \ _{K}\mathcal{M}_{r;\upsilon }^{\left( \gamma
,\delta \right) }\left( u,t\right) dudt  \label{Eq} \\
& =\frac{\left( -1\right) ^{k+r}\Gamma \left( k+\beta +1\right) \Gamma
\left( r+\delta +1\right) }{\left( 2\pi \right) ^{2}}\int\limits_{-\infty
}^{\infty }\int\limits_{-\infty }^{\infty }G_{1}\left( p_{1},q_{1};\xi
_{1},\xi _{2}\right) \overline{G_{2}\left( p_{2},q_{2};\xi _{1},\xi
_{2}\right) }  \notag \\
& \times \Psi _{1}\left( k,p_{1},q_{1},\alpha ,\beta ,\upsilon ;\xi _{1},\xi
_{2}\right) \overline{\Psi _{2}\left( r,p_{2},q_{2},\gamma ,\delta ,\upsilon
;\xi _{1},\xi _{2}\right) }d\xi _{1}d\xi _{2}.  \notag
\end{align}%
Thus, if in the left-hand side of (\ref{Eq}) $p_{1}+p_{2}+1=\alpha =\gamma $
and $q_{1}+q_{2}-1=\beta =\delta $ are taken then according to the
orthogonality relation (\ref{MKort}), the equation (\ref{Eq}) reads as%
\begin{align*}
& \int\limits_{0}^{\infty }\int\limits_{0}^{\infty }u^{q_{1}+q_{2}-1}\left(
1+u\right) ^{-\left( p_{1}+q_{1}+p_{2}+q_{2}\right) }e^{-t}t^{-\left(
p_{1}+q_{1}+p_{2}+q_{2}\right) } \\
& \times \ _{K}M_{k;\upsilon }^{\left( p_{1}+p_{2}+1,q_{1}+q_{2}-1\right)
}\left( u,t\right) \ _{K}\mathcal{M}_{r;\upsilon }^{\left(
p_{2}+p_{1}+1,q_{2}+q_{1}-1\right) }\left( u,t\right) dudt \\
& =\frac{k!\Gamma \left( p_{1}+p_{2}+1-k\right) \Gamma \left(
q_{1}+q_{2}+k\right) }{\left( p_{1}+p_{2}-2k\right) \Gamma \left(
p_{1}+p_{2}+q_{1}+q_{2}-k\right) }\delta _{k,r} \\
& =\frac{\Gamma ^{2}\left( k+q_{1}+q_{2}\right) }{\left( 2\pi \right) ^{2}}%
\int\limits_{-\infty }^{\infty }\int\limits_{-\infty }^{\infty }G_{1}\left(
p_{1},q_{1};\xi _{1},\xi _{2}\right) \overline{G_{2}\left( p_{2},q_{2};\xi
_{1},\xi _{2}\right) } \\
& \times \Psi _{1}\left( k,p_{1},q_{1},p_{1}+p_{2}+1,q_{1}+q_{2}-1,\upsilon
;\xi _{1},\xi _{2}\right) \\
& \times \overline{\Psi _{2}\left(
r,p_{2},q_{2},p_{2}+p_{1}+1,q_{2}+q_{1}-1,\upsilon ;\xi _{1},\xi _{2}\right) 
}d\xi _{1}d\xi _{2}.
\end{align*}%
That is,%
\begin{align*}
& \int\limits_{-\infty }^{\infty }\int\limits_{-\infty }^{\infty
}G_{1}\left( p_{1},q_{1};\xi _{1},\xi _{2}\right) \Psi _{1}\left(
k,p_{1},q_{1},p_{1}+p_{2}+1,q_{1}+q_{2}-1,\upsilon ;\xi _{1},\xi _{2}\right)
\\
& \times \overline{G_{2}\left( p_{2},q_{2};\xi _{1},\xi _{2}\right) }%
\overline{\Psi _{2}\left( r,p_{2},q_{2},p_{2}+p_{1}+1,q_{2}+q_{1}-1,\upsilon
;\xi _{1},\xi _{2}\right) }d\xi _{1}d\xi _{2} \\
& =\frac{\left( 2\pi \right) ^{2}k!\Gamma \left( p_{1}+p_{2}+1-k\right) }{%
\left( p_{1}+p_{2}-2k\right) \Gamma \left( q_{1}+q_{2}+k\right) \Gamma
\left( p_{1}+p_{2}+q_{1}+q_{2}-k\right) }\delta _{k,r}.
\end{align*}

As a result, the following theorem can be given.

\begin{theorem}
For $p_{1},p_{2},q_{1},q_{2}>0,$ $p_{1}+p_{2}>2k$, $p_{1}+q_{1}<1/2$ and $%
p_{2}+q_{2}<1/2$, the families of functions$\ \Upsilon _{1}\left(
k,p_{1},q_{1},q_{2},p_{2},\upsilon ;y,z\right) $ and$\ \Upsilon _{2}\left(
r,p_{2},q_{2},q_{1},p_{1},\upsilon ;y,z\right) $ are finite biorthogonal
functions with respect to the weight function%
\begin{eqnarray*}
\rho \left( p_{1},q_{1},p_{2},q_{2};y,z\right)  &=&\Gamma \left(
p_{1}+iy\right) \Gamma \left( p_{2}-iy\right) \Gamma \left( q_{1}-iy\right)
\Gamma \left( q_{2}+iy\right)  \\
&&\times \Gamma \left( 1/2-p_{1}-q_{1}-iz\right) \Gamma \left(
1/2-p_{2}-q_{2}+iz\right) 
\end{eqnarray*}%
and the corresponding finite biorthogonality relation is%
\begin{align*}
& \int\limits_{-\infty }^{\infty }\int\limits_{-\infty }^{\infty }\Gamma
\left( p_{1}+iy\right) \Gamma \left( p_{2}-iy\right) \Gamma \left(
q_{1}-iy\right) \Gamma \left( q_{2}+iy\right) \Gamma \left( \frac{1}{2}%
-p_{1}-q_{1}-iz\right) \Gamma \left( \frac{1}{2}-p_{2}-q_{2}+iz\right)  \\
& \times \Upsilon _{1}\left( k,p_{1},q_{1},q_{2},p_{2},\upsilon
;iz,it\right) \Upsilon _{2}\left( r,p_{2},q_{2},q_{1},p_{1},\upsilon
;-iz,-it\right) dydz \\
& =\frac{4\pi ^{2}k!\Gamma \left( p_{1}+q_{1}\right) \Gamma \left(
p_{2}+q_{2}\right) \Gamma \left( p_{1}+p_{2}+1-k\right) }{\left(
p_{1}+p_{2}-2k\right) \Gamma \left( k+q_{1}+q_{2}\right) \Gamma \left(
p_{1}+p_{2}+q_{1}+q_{2}-k\right) }\delta _{k,r},
\end{align*}%
where%
\begin{equation*}
\Upsilon _{1}\left( k,p_{1},q_{1},q_{2},p_{2},\upsilon ;y,z\right) =2^{\frac{%
1}{2}-p_{1}-q_{1}}\Psi _{1}\left(
k,p_{1},q_{1},p_{1}+p_{2}+1,q_{1}+q_{2}-1,\upsilon ;-iy,-iz\right) 
\end{equation*}%
and%
\begin{equation*}
\Upsilon _{2}\left( r,p_{2},q_{2},q_{1},p_{1},\upsilon ;y,z\right) =2^{\frac{%
1}{2}-p_{2}-q_{2}}\Psi _{2}\left(
r,p_{2},q_{2},p_{2}+p_{1}+1,q_{2}+q_{1}-1,\upsilon ;-iy,-iz\right) .
\end{equation*}
\end{theorem}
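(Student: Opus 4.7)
The plan is to run Parseval's identity (\ref{Parseval}) on the two specific functions $d(y,z)$ and $f(y,z)$ introduced just before the theorem, and then translate both sides of the resulting identity into the form claimed. All the heavy lifting, namely the explicit evaluation of $\mathcal{F}(d)$ and $\mathcal{F}(f)$, has already been carried out and packaged as (\ref{F1})--(\ref{F2}). So the theorem is really the end-product of combining (\ref{F1}), (\ref{F2}), (\ref{Parseval}), and the finite biorthogonality (\ref{MKort}). My proof proposal is organized in three steps.

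First, I would substitute the expressions (\ref{F1}) and (\ref{F2}) into the right-hand side of (\ref{Parseval}) for the specified $d$ and $f$. This produces a double integral over $\xi_1,\xi_2\in\mathbb R$ whose integrand is the product $G_1\,\overline{G_2}\,\Psi_1\,\overline{\Psi_2}$, multiplied by the constants $\tfrac{(-1)^{k+r}\Gamma(k+\beta+1)\Gamma(r+\delta+1)}{(2\pi)^2}$. Relabeling the Fourier variables as $(y,z)$ and using $\overline{\Gamma(w)}=\Gamma(\bar w)$ together with the definitions of $G_1$, $G_2$, I would recognize the product $G_1(p_1,q_1;{-iy},{-iz})\,\overline{G_2(p_2,q_2;{-iy},{-iz})}$ as exactly the weight $\rho(p_1,q_1,p_2,q_2;y,z)$ of the theorem, up to the explicit factor $2^{1-p_1-q_1-p_2-q_2}$ which is absorbed into the normalizations that define $\Upsilon_1,\Upsilon_2$.

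Second, I would turn to the left-hand side of (\ref{Parseval}) and change variables $e^y=u$, $e^z=t$, as already done in the excerpt. The Jacobian together with the exponential factors $e^{q_i y+(1/2-p_i-q_i)z-e^z/2}$ and $(1+e^y)^{-(p_i+q_i)}$ collapses, on multiplying the two integrands $d\overline f$, to the single weight $u^{q_1+q_2-1}(1+u)^{-(p_1+q_1+p_2+q_2)}e^{-t}\,t^{-(p_1+q_1+p_2+q_2)}$ against the product $\,_{K}M_{k;\upsilon}^{(\alpha,\beta)}(u,t)\,_{K}\mathcal M_{r;\upsilon}^{(\gamma,\delta)}(u,t)$. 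At this point I set $\alpha=\gamma=p_1+p_2+1$ and $\beta=\delta=q_1+q_2-1$; the polynomials are now indexed by parameters $(p_1+p_2+1,q_1+q_2-1)$ and the weight in the resulting integral matches the weight of (\ref{MKort}) with $p=p_1+p_2+1$ and $q=q_1+q_2-1$. Applying (\ref{MKort}) directly evaluates the left-hand side to $\tfrac{k!\,\Gamma(p_1+p_2+1-k)\Gamma(q_1+q_2+k)}{(p_1+p_2-2k)\Gamma(p_1+p_2+q_1+q_2-k)}\delta_{k,r}$, provided $p_1+p_2>2k$ and $q_1+q_2>0$, which are guaranteed by the stated hypotheses $p_i,q_i>0$ and $p_1+p_2>2k$.

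Third, I would equate the two sides and solve for the integral of $\Upsilon_1\,\overline{\Upsilon_2}$ against the weight $\rho$. Dividing by $(-1)^{k+r}\Gamma(k+\beta+1)\Gamma(r+\delta+1)$ (which for $r=k$ reduces to $\Gamma^2(k+q_1+q_2)$ because $\beta=\delta=q_1+q_2-1$ and $(-1)^{2k}=1$) and multiplying by the $(2\pi)^2$ to clear the Parseval factor yields precisely the constant $\frac{4\pi^2 k!\,\Gamma(p_1+q_1)\Gamma(p_2+q_2)\Gamma(p_1+p_2+1-k)}{(p_1+p_2-2k)\Gamma(k+q_1+q_2)\Gamma(p_1+p_2+q_1+q_2-k)}\,\delta_{k,r}$ once the Beta-function identity $B(p_i+iy,q_i-iy)=\Gamma(p_i+iy)\Gamma(q_i-iy)/\Gamma(p_i+q_i)$ is used to separate the Gamma factors in $\rho$ from the $\Gamma(p_1+q_1)\Gamma(p_2+q_2)$ that end up in the numerator. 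The hypotheses $p_i+q_i<1/2$ are exactly what is needed to ensure $\mathrm{Re}(1/2-p_i-q_i\mp iz)>0$ so that the Gamma integrals producing $G_1,G_2$ converge; the hypotheses $p_i,q_i>0$ guarantee convergence of the Beta integrals.

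The main obstacle I anticipate is bookkeeping rather than genuine difficulty: carefully matching the parameter substitutions $\alpha=\gamma=p_1+p_2+1$, $\beta=\delta=q_1+q_2-1$ into both the $\Psi_i$ factors and the orthogonality constant, and correctly folding the factor $2^{1-p_1-q_1-p_2-q_2}$ coming from $G_1\overline{G_2}$ into the prefactors defining $\Upsilon_1$ and $\Upsilon_2$, so that the final right-hand side constant comes out exactly as stated without stray powers of two or sign errors.
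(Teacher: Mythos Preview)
Your proposal is correct and follows essentially the same route as the paper: the argument in the text preceding the theorem is precisely to substitute (\ref{F1}) and (\ref{F2}) into Parseval's identity, change variables $e^{y}=u$, $e^{z}=t$, specialize $\alpha=\gamma=p_{1}+p_{2}+1$, $\beta=\delta=q_{1}+q_{2}-1$, and invoke (\ref{MKort}) to evaluate the left-hand side, then rearrange using the Beta identity to isolate the weight $\rho$ and the constants $\Gamma(p_{1}+q_{1})\Gamma(p_{2}+q_{2})$. Your three steps and the paper's derivation coincide in substance and in order.
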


\begin{remark}
It is noted that the weight function of this biorthogonality relation is
positive for $p_{1}=p_{2}$ and $q_{1}=q_{2}$.
\end{remark}

\begin{center}
$\mathtt{FUNDING}$
\end{center}

The author G\"{u}ldo\u{g}an Lekesiz E. in the work has been partially
supported by the Scientific and Technological Research Council of T\"{u}%
rkiye (TUBITAK) (Grant number 2218-122C240).\bigskip

\begin{center}
\texttt{DATA AVAILABILITY}
\end{center}

Data sharing not applicable to this article as no datasets were generated or
analysed during the current study.\bigskip

\begin{center}
DECLARATIONS
\end{center}

$\mathbf{Conflict\ of\ Inte}$\textbf{$r$}$\mathbf{est\ }$The authors
declared that they have no conflict of interest.

\bigskip


\begin{thebibliography}{99}
\bibitem{LN} Liu, Z., Narayan, A. On the Computation of Recurrence
Coefficients for Univariate Orthogonal Polynomials. J Sci Comput 88, 53
(2021).

\bibitem{Gau} Gautschi, W., Orthogonal polynomials-Constructive theory and
applications, Journal of Computational and Applied Mathematics, Vol 12--13,
1985, pp. 61-76.

\bibitem{Gau2} Gautschi, W., Orthogonal polynomials: applications and
computation, Acta Numerica, 1996, 5:45-119, doi:10.1017/S0962492900002622.

\bibitem{Koelink} Koelink, H.T., On Jacobi and continuous Hahn polynomials,
Proc Amer Math Soc., 1996, 124(3), 887--898.

\bibitem{KM2} Koepf, W. and Masjed-Jamei, M., A generic polynomial solution
for the differential equation of hypergeometric type and six sequences of
orthogonal polynomials related to it, Integr. Transf. Spec. Funct., 2006,
17(8), 559--576.

\bibitem{Jamei2} Masjed-Jamei, M., A basic class of symmetric orthogonal
polynomials using the extended Sturm-Liouville theorem for symmetric
functions, J. Math. Anal. Appl., 2007, 325(2), 753--775.

\bibitem{AAT} Akta\c{s}, R., Alt\i n, A. and Ta\c{s}delen, F., A note on a
family of two-variable polynomials, J. Comput. Appl. Math., 235, 4825--4833
(2011).

\bibitem{GAA2} G\"{u}ldo\u{g}an Lekesiz, E., Akta\c{s}, R. and Area, I.,
Fourier Transforms of Some Special Functions in Terms of Orthogonal
Polynomials on the Simplex and Continuous Hahn Polynomials, Bull. Iran.
Math. Soc., 48, 3535--3560 (2022).

\bibitem{GAA} G\"{u}ldo\u{g}an Lekesiz, E., Akta\c{s}, R. and Area, I.,
Fourier Transform of the Orthogonal Polynomials on the Unit Ball and
Continuous Hahn Polynomials, Axioms, 2022, 11(10):558.

\bibitem{SMA} Soleyman, F., Masjed-Jamei, M. and Area, I., A finite class of
q-orthogonal polynomials corresponding to inverse gamma distribution. Anal.
Math. Phys. 7, 479--492 (2017).

\bibitem{MSAN} Masjed-Jamei, M., Soleyman, F., Area, I. and Nieto, J.J., Two
finite q-Sturm liouville problems and their orthogonal polynomial solutions,
Filomat, 32(1), 231--244 (2018).

\bibitem{GL2} G\"{u}ldo\u{g}an, E., Akta\c{s}, R. and Masjed-Jamei, M., On
finite classes of two-Variable orthogonal polynomials, Bull. Iran. Math.
Soc., 46(4) (2020), pp. 1163--1194.

\bibitem{GL3} G\"{u}ldo\u{g}an Lekesiz, E., Akta\c{s}, R. and Masjed-Jamei,
M., Fourier transforms of some finite bivariate orthogonal polynomials,
Symmetry, 2021, 13, 452. https://doi.org/10.3390/sym13030452.

\bibitem{GL4} G\"{u}ldo\u{g}an Lekesiz, E. and Akta\c{s}, R., Some limit
relationships between some two-variable finite and infinite sequences of
orthogonal polynomials, J. Diff. Equations Appl., 2021, 27, 1692--1722.

\bibitem{GL5} G\"{u}ldo\u{g}an Lekesiz, E. and Area, I., Some New Families
of Finite Orthogonal Polynomials in Two Variables, Axioms, 2023, 12(10):932.

\bibitem{Jamei} Masjed-Jamei, M., Three finite classes of hypergeometric
orthogonal polynomials and their application in functions approximation,
Integral Transforms and Spec. Funct., 13(2) (2002), pp. 169--190.

\bibitem{KM} Koepf, W. and\ Masjed-Jamei, M., Two classes of special
functions using Fourier transforms of some finite classes of classical
orthogonal polynomials, Proc. Am. Math. Soc., 135(11), 3599--3606 (2007).

\bibitem{MMH} Masjed-Jamei, M., Marcell\'{a}n, F. and Huertas, E.J., A
finite class of orthogonal functions generated by Routh--Romanovski
polynomials, Complex Var. Elliptic Equ., 59(2), 162--171 (2014).

\bibitem{GL} G\"{u}ldo\u{g}an Lekesiz, E., On a family of bivariate
orthogonal functions, Afr. Mat., 35, 9 (2024),
https://doi.org/10.1007/s13370-023-01147-4.

\bibitem{Koornwinder} Koornwinder, T.H., Two-variable analogues of the
classical orthogonal polynomials, In: Askey, R. (ed.) Theory and Application
of Special Functions, pp. 435--495, Academic Press, New York (1975).

\bibitem{FPP} Fern\'{a}ndez, L., P\'{e}rez, T.E. and Pi\~{n}ar, M.A., On
Koornwinder classical orthogonal polynomials in two variables, J. Comput.
Appl. Math., 236, 3817--3826 (2012).

\bibitem{FPP2} Fern\'{a}ndez, L., P\'{e}rez, T.E. and Pi\~{n}ar, M.A.,
Classical orthogonal polynomials in two variables: a matrix approach, Numer.
Algorithms, 39, 131--142 (2005).

\bibitem{MMPP} Marcell\'{a}n, F., Marriaga, M., P\'{e}rez, T.E. and Pi\~{n}%
ar, M.A., On bivariate classical orthogonal polynomials, Appl. Math.
Comput., 325, 340--357 (2018).

\bibitem{MMPP2} Marcell\'{a}n, F., Marriaga, M., P\'{e}rez, T.E. and Pi\~{n}%
ar, M.A., Matrix Pearson equations satisfied by Koornwinder weights in two
variables, Acta Appl. Math., 153, 81--100 (2018).

\bibitem{MPP} Marriaga, M., P\'{e}rez, T.E. and Pi\~{n}ar, M.A., Three term
relations for a class of bivariate orthogonal polynomials, Mediterr. J.
Math., 14(54), 26 (2017).

\bibitem{Konhauser2} Konhauser, J.D.E., Some properties of biorthogonal
polynomials, Journ. Math. Anal. Appl., 11 (1965), 242--260.

\bibitem{Konhauser} Konhauser, J. D. E., Biorthogonal polynomials suggested
by the Laguerre polynomials, Pacific Journal of Mathematics, 21(2) (1967),
303--314.

\bibitem{MT} Madhekar, H. C. and Thakare, N. K., Biorthogonal polynomials
suggested by the Jacobi polynomials, Pacific Journal of Mathematics, 100(2)
(1982), 417--424.

\bibitem{MT2} Madhekar, H. C. and Thakare, N. K., Biorthogonal polynomials
suggested by the Hermite polynomials, Indian J. Pure Appl. Math., 17(8)
(1986), 1031--1041.

\bibitem{MT0} Madhekar, H. C. and Thakare, N. K., A pair of biorthogonal
polynomials for the Szeg\"{o}-Hermite weight function, Internat. J. Math. \&
Math. Sci., 11(4) (1988), 763-768.

\bibitem{Carlitz} Carlitz, L., A note on certain biorthogonal polynomials,
24(3) (1968), 425--430.

\bibitem{AlSalamVerma} Al-Salam, W.A., Verma, A., A pair of biorthogonal
sets of polynomials, Rocky Mountain J. Math. 13(2) (1983), 273-280.

\bibitem{BinSaad} Bin-Saad, M.G., Associated Laguerre-Konhauser polynomials
quasi-monomiality and operational identities, J. Math. Anal.Appl., 324
(2006), 1438-1448.

\bibitem{OzKurt} \"{O}zarslan, M.A., K\"{u}rt, C., Bivariate Mittag-Leffler
functions arising in the solutions of convolution integral equation with
2D-Laguerre-Konhauser polynomials in the kernel, Appl. Math. Comp., 347
(2019), 631-644.

\bibitem{OzEl} \"{O}zarslan, M.A., Elidemir, \.{I}.O., Fractional calculus
operator emerging from the 2D Biorthogonal Hermite Konhauser Polynomials,
arXiv 2024, arxiv:2404.00035.

\bibitem{EBM} G\"{u}ldo\u{g}an Lekesiz, E., \c{C}ekim, B. and \"{O}zarslan,
M.A., Finite bivariate biorthogonal I - Konhauser polynomials, arXiv 2024,
arXiv:2408.07811 (doi: 10.48550/arXiv.2408.07811).

\bibitem{OzEl2} \"{O}zarslan, M.A., Elidemir, \.{I}.O., On bivariate Jacobi
Konhauser Polynomials, arXiv 2024, arXiv:2405.15043.

\bibitem{Masjed} Masjed-Jamei, M., Three finite classes of hypergeometric
orthogonal polynomials and their application in functions approximation,
Integr. Trans. Spec. Funct., 13(2) (2002), 169--190.

\bibitem{SD} Srivastava, H.M., Daoust, M.C., A note on the convergence of
Kamp\^{e} de F\^{e}riet's double hypergeometric series, Math. Nachr., 53
(1972), 151-159.

\bibitem{Erdelyi} Erdelyi, A., et al., Higher Transcendental Functions,
vols. 1-3, McGraw-Hill, New York, 1953.

\bibitem{WW} Whittaker, E.T., Watson, G.N., A course of modern analysis: An
introduction to the general theory of infinite process an of analytic
functions: with an account of the principal transcendental functions,
Cambridge, London and New York, 1927.

\bibitem{SD2} Srivastava, H.M. and Daoust, M.C., On Eulerian integrals
associated with Kamp\'{e} de F\'{e}riet's function, Publications de
l'Institut Math\'{e}matique [Elektronische Ressource], 1969;23:199-202.

\bibitem{KG} K\i l\i\c{c}man, A. and Gadain, H.E., On the applications of
Laplace and Sumudu transforms, J. Frankl. Inst., 347(5) (2010), 848-862.

\bibitem{Kilbas} Kilbas, A.A., Srivastava, H.M., Trujillo, J.J., Theory and
Applications of Fractional Differential Equations, North-Holland Mathematics
Studies, 204, Elsevier, Amsterdam, 2006.

\bibitem{Davies} Davies, B., Integral transforms and their applications,
Applied Mathematical Sciences, Vol. 25. Springer-Verlag, New
York-Heidelberg, 1978.
\end{thebibliography}
\end{document}